\newtheorem{theorem}{Theorem}
\newtheorem{corollary}{Corollary}
\newtheorem*{thmA}{Theorem A}
\newtheorem*{thmB}{Theorem B}
\newtheorem*{thmC}{Theorem C}
\theoremstyle{definition}
\newtheorem{remark}{Remark}
\author{Mario Krni\'{c}  and Nicu\c{s}or Minculete}
\date{}
\title[Bounds for the $p$-angular distance]{Bounds for the  $p$-angular distance and characterizations of  inner product spaces}
\begin{document}

\renewcommand{\thetheorem}{\arabic{theorem}}

\setcounter{theorem}{0}

\begin{abstract}
\noindent Based on a suitable improvement  of a triangle inequality, we derive new mutual bounds for  $p$-angular distance $\alpha_p[x,y]=\big\Vert \Vert x\Vert^{p-1}x- \Vert y\Vert^{p-1}y\big\Vert$, in a normed linear space $X$. We show that our
estimates are more accurate than the previously known upper bounds established by Dragomir, Hile and Maligranda. Next, we give several characterizations  of inner product spaces
with regard to the $p$-angular distance. In particular, we prove that if $|p|\geq |q|$, $p\neq q$, then $X$ is an inner product space if and only if for every $x,y\in X\setminus \{0\}$,
$${\alpha_p[x,y]}\geq \frac{{\|x\|^{p}+\|y\|^{p} }}{\|x\|^{q}+\|y\|^{q}  }\alpha_q[x,y].$$
\end{abstract}

\footnotetext{2010 {\it Mathematics Subject Classification.}
47A30, 46C15, 26D15.}

\footnotetext{ {\it Key words and phrases.} inner product space, normed space, $p$-angular distance, characterization of inner product space, the Hile inequality.}

\maketitle

\section{Introduction}

Throughout this paper, $\left(X, \left\Vert \cdot \right\Vert \right)$ stands for a real normed  linear space.
During decades, the problem of providing necessary and sufficient conditions for a normed space to be an inner product space
has been studied by numerous authors. Some recent characterizations of inner product spaces can be found in \cite{app2, alrashed, alsina, amir, app1, rasias, vuvu} and the  references therein.

One of the most celebrated
characterizations of  inner product spaces has been based on the so-called Dunkl-Wiliams inequality which provides an upper bound for an angular distance (or Clarkson distance, see \cite{klarkson}) between nonzero vectors $x,y\in X$, defined by
$$
\alpha[x,y]=\left\Vert\frac{x}{\Vert x \Vert}- \frac{y}{\Vert y\Vert}\right\Vert.
$$
Namely, Dunkl and Wiliams \cite{dunkl}, proved that the  inequality
\begin{equation*}
\alpha[x,y]\leq \frac{4\left\Vert x-y\right\Vert }{\left\Vert
x\right\Vert + \left\Vert y\right\Vert }
\end{equation*}
holds for all nonzero vectors $x,y$ in a real normed linear space $X$. The authors also showed that  constant $4$ can be replaced by $2$ if $X$
is an inner product space. In addition,
Kirk and Smiley \cite{stenli}, proved that the  inequality
\begin{equation}\label{stanley}
\alpha[x,y]\leq\frac{2\left\Vert x-y\right\Vert }{\left\Vert
x\right\Vert + \left\Vert y\right\Vert }
\end{equation}
serves as a characterization of an inner product space.

In 1993, Al-Rashed \cite{alrashed}, generalized  (\ref{stanley}) by showing that
if $0<q\leq 1$, then a normed linear space $X=\left(X, \left\Vert \cdot \right\Vert \right)$ is an inner product space if and only if the  inequality
\begin{equation}\label{al-rashed}
\alpha[x,y]\leq\frac{2^{\frac{1}{q}}\left\Vert x-y\right\Vert }{\left(\Vert
x\Vert^{q} + \Vert y\Vert^{q} \right)^{\frac{1}{q}}}
\end{equation}
holds for all nonzero vectors $x,y\in X$. Moreover, in 2010, Dadipour and  Moslehian \cite{dadipur}, extended (\ref{al-rashed}) by giving characterization of
an inner product  space expressed in terms of $p$-angular distance $\alpha_p[x,y]$, defined by
$$
\alpha_p[x,y]=\left\Vert \Vert x\Vert^{p-1}x- \Vert y\Vert^{p-1}y\right\Vert,\ p\in \mathbb{R},
$$
where $x,y$ are nonzero vectors (note that $\alpha_0[x,y]=\alpha[x,y]$). Finally, Rooin et al. \cite{rocky}, extended the above  results from \cite{alrashed, dadipur}
by giving a whole series of comparative relations for the $p$-angular distance which serve as characterizations of inner product spaces. We give here only a part
of the main result from \cite{rocky} that will be important in our study.
\begin{thmA}
Let $X$ be a normed linear space, $\mathrm{dim}\ \! X\geq 3$, and $p,q\in \mathbb{R}$ with $|p|\leq |q|$ and $p\neq q$. Then the following statements are mutually equivalent:
\begin{enumerate}
\item[(I)] $X$ is an inner product space.

\item[(II)] If $0\leq \frac{p}{q}<1$, then for any $x,y\neq 0$,
\begin{equation*}
\alpha_p[x,y]\leq \frac{2\alpha_q[x,y]}{\|x\|^{q-p}+\|y\|^{q-p}}.
\end{equation*}

\item[(III)] For any $x,y\neq 0$,
\begin{equation*}
\frac{\alpha_p[x,y]}{\|x\|^{\frac{p}{2}}\|y\|^{\frac{p}{2}} }\leq \frac{\alpha_q[x,y]}{\|x\|^{\frac{q}{2}}\|y\|^{\frac{q}{2}} }.
\end{equation*}

\item[(IV)] If $p\in \mathbb{R}\setminus\{0\}$, then for any $x,y\neq 0$,
\begin{equation*}
\alpha_{-p}[x,y]=\|x\|^{-p}\|y\|^{-p}\alpha_p[x,y].
\end{equation*}

\item[(V)] If $0\leq \frac{p}{q}<1$, then there exist $r\in \mathbb{R}$ such that for any $x,y\neq 0$,
\begin{equation*}
\alpha_p[x,y]\leq \frac{2^{\frac{1}{r}}\alpha_q[x,y]}{\left(\|x\|^{r(q-p)}+\|y\|^{r(q-p)}\right)^{\frac{1}{r}}}.
\end{equation*}

\item[(VI)] If $0< \frac{p}{q}<1$, then for any $x,y\neq 0$ with $\|x\|\neq \|y\|$,
\begin{equation*}
\alpha_p[x,y]< \frac{\alpha_q[x,y]}{\min\{\|x\|^{q-p}, \|y\|^{q-p} \}}.
\end{equation*}
\end{enumerate}
\end{thmA}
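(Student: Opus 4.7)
The plan is to establish the equivalence of (I)--(VI) by first deriving each of (II)--(VI) directly from (I) via the inner product identity for $\alpha_p$, and then closing the cycle with a reverse implication that forces $X$ to satisfy the parallelogram law.

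For the forward direction, in any inner product space one has
\[
\alpha_p[x,y]^2=\|x\|^{2p}+\|y\|^{2p}-2\|x\|^p\|y\|^p\,t,\qquad t=\frac{\langle x,y\rangle}{\|x\|\|y\|}\in[-1,1].
\]
Dividing by $\|x\|^p\|y\|^p$ and setting $s=\|x\|/\|y\|$ yields $\alpha_p[x,y]^2/(\|x\|^p\|y\|^p)=s^p+s^{-p}-2t$. Since the map $r\mapsto s^r+s^{-r}$ is nondecreasing in $|r|$ on $(0,\infty)$ for each fixed $s$, assertion (III) follows immediately from $|p|\leq|q|$, with strict inequality whenever $s\neq 1$, which gives (VI). Assertion (IV) drops out by substituting $-p$ for $p$ in the identity and factoring $\|x\|^{-2p}\|y\|^{-2p}$ from the resulting quadratic. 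Assertions (II) and (V) follow by comparing the two squared ratios via the power mean inequality applied to the pair $(\|x\|^{q-p},\|y\|^{q-p})$, which is exactly what reproduces the denominators $\|x\|^{q-p}+\|y\|^{q-p}$ and $(\|x\|^{r(q-p)}+\|y\|^{r(q-p)})^{1/r}$.

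The main obstacle is the reverse direction, where I would target (IV)$\Rightarrow$(I) since an equality tends to be the easiest hypothesis to invert. Because $\dim X\geq 3$, it suffices to show that every two-dimensional subspace of $X$ is an inner product space, as the parallelogram law can be verified one planar slice at a time. On a fixed two-dimensional subspace the equality $\alpha_{-p}[x,y]=\|x\|^{-p}\|y\|^{-p}\alpha_p[x,y]$ should, when evaluated on a suitable one-parameter family of test vectors (for instance, $x=u$ and $y=u+\lambda v$ with $\lambda$ a real parameter), collapse to a functional equation on the norm that is equivalent to the parallelogram identity. The delicate step is that one possesses the relation only for a single exponent $p$, so extracting the parallelogram law will require an astute choice of vectors together with a limiting or differentiation argument in a scaling parameter.

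Finally, to close the cycle I would verify the short chain (III)$\Rightarrow$(IV), which is immediate upon applying (III) once with parameters $(p,-p)$ and once with $(-p,p)$, both permissible because $|p|=|-p|$; the two inequalities combine to force the equality of (IV). It then remains to show that (II), (V), and (VI) each imply (III) by specializing parameters or passing to appropriate limits, so that every remaining implication is channelled through the single hard step (IV)$\Rightarrow$(I).
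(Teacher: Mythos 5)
The paper does not prove Theorem A at all --- it is quoted from Rooin--Rajabi--Moslehian \cite{rocky}, and the only information the paper volunteers about its proof is the remark that it relies on the symmetry of Birkhoff--James orthogonality, which is precisely where the hypothesis $\mathrm{dim}\, X\geq 3$ enters. Judged on correctness, your proposal has a genuine gap already in the forward direction: (II) does not follow from (III) by the power mean inequality. Statement (III) bounds $\alpha_p[x,y]$ by $\alpha_q[x,y]$ divided by the \emph{geometric} mean of $\|x\|^{q-p}$ and $\|y\|^{q-p}$, whereas (II) divides by their \emph{arithmetic} mean; since the arithmetic mean dominates the geometric mean, (II) is a strictly stronger bound than (III), and the power mean inequality points the wrong way. (Compare inequality (\ref{sredine}) and Remark \ref{deveti remark}: in that one-parameter family the bound gets stronger as $r$ increases to $1$, (III) is the $r=0$ member and (II) is the extremal $r=1$ member.) Statement (II) does hold in an inner product space, but it needs its own computation; for instance, with $t=\langle x,y\rangle/(\|x\|\|y\|)$ both $4\alpha_q^2[x,y]$ and $\alpha_p^2[x,y]\left(\|x\|^{q-p}+\|y\|^{q-p}\right)^2$ are affine in $t$, so it suffices to verify the endpoints $t=\pm 1$, which reduce to elementary inequalities in $\|x\|,\|y\|$. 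Your derivations of (III), (IV) and (VI) are fine, and (V) does follow from (III) provided you say explicitly that you choose $r<0$ (for $r>0$ the power mean inequality again goes the wrong way).

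The reverse direction is the heart of the theorem and your proposal does not contain a proof of it: no functional equation is derived, no test vectors are exhibited, and the ``collapse to the parallelogram identity'' is asserted rather than argued. Worse, the plan is structurally suspect. Hypothesis (IV) restricted to a two-dimensional subspace is exactly hypothesis (IV) for that subspace, so if your planar reduction worked, the assumption $\mathrm{dim}\, X\geq 3$ would be superfluous; but that assumption is where the known proof lives, since the symmetry of Birkhoff--James orthogonality characterizes inner product spaces only in dimension at least three (Day \cite{bj1}, James \cite{bj2}), Radon planes being two-dimensional counterexamples to that characterization. Finally, your scheme for closing the cycle is not legitimate within the logical structure of the theorem: the statement fixes one pair $(p,q)$, so ``applying (III) with parameters $(p,-p)$ and $(-p,p)$'' invokes instances of (III) that are not part of the hypothesis and are only available after one already knows $X$ is an inner product space. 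Routing every implication through (IV)$\Rightarrow$(I) also fails outright when $p=0$, where (IV) is vacuous but (II) --- then the Kirk--Smiley inequality (\ref{stanley}) --- must still imply (I).
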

Some interesting characterizations of inner product spaces connected with the concept of the  $p$-angular distance can also be found in \cite{amini-mia,degen, krnic,rooin0, rooin}.
In particular, Rooin et. al. \cite{rooin}, established a very nice characterization of an inner product space by giving an explicit formula for the $p$-angular distance. More precisely,
 they proved that if $p\neq 1$, then a normed space $X$ is an inner product space if and only if
 \begin{equation}\label{zap}
\alpha_p^2[x,y] =\Vert x \Vert^{p-1}\Vert y \Vert^{p-1}\Vert x-y \Vert^{2}+\left( \Vert x \Vert^{p-1} -\Vert y \Vert^{p-1}\right)\left( \Vert x \Vert^{p+1} -\Vert y \Vert^{p+1}\right)
\end{equation}
holds for all nonzero elements $x,y\in X$.

In the last few decades, several authors have been interested in providing bounds for the $p$-angular distance in an arbitrary normed linear space.
In 2006, Maligranda \cite{maligranda}, established the following mutual bounds for angular distance $\alpha[x,y]$, based on improved form of the  triangle inequality:
\begin{equation}\label{obostranoao}
\dfrac{\Vert x-y\Vert-|\Vert x\Vert - \Vert y\Vert|}{\min\{\Vert x\Vert,\Vert y\Vert\}}\leq \alpha[x,y]\leq \dfrac{\Vert x-y\Vert+|\Vert x\Vert - \Vert y\Vert|}{\max\{\Vert x\Vert,\Vert y\Vert\}}.
\end{equation}
The author also established the  following upper bounds for the $p$-angular distance, dependent on index $p$:
\begin{equation}\label{erstemali}
\alpha_p[x,y]\leq \left\{\begin{array}{lr}
p \max \!^{p-1}\{\|x\|, \|y\|\}\|x-y\|, &  p\geq 1, \\
{(2-p)\|x-y\|}{\max \!^{p-1}\{\|x\|, \|y\|\}}, &  0\leq p<1,\\
 (2-p) \frac{\min \!^{p}\{\|x\|, \|y\|\}}{\max\{\|x\|,\|y\| \}}\|x-y\|       ,& p<0.
\end{array} \right.
\end{equation}
We also recall the following upper bound for the $p$-angular distance stated by Hile \cite{hile}:
\begin{equation}\label{obicni hile}
\alpha_p[x,y]\leq \frac{\|y\|^{p}-\|x\|^{p}}{\|y\|-\|x\|}\|x-y\|,
\end{equation}
 for $p\geq 1$ and  $x,y\in X$ with $\|x\|\neq \|y\|$.

In 2015,  Dragomir \cite{ukraine}, established the  integral upper bounds for the $p$-angular distance, which resulted in a refinement of
the Maligranda inequality (\ref{erstemali}), when $p\geq 2$. The author also showed that the Hile inequality (\ref{obicni hile}) is more precise
than the first inequality in (\ref{erstemali}),  when $p\geq 2$. In 2018, Rooin et al. \cite{rooin}, generalized the above results from \cite{ukraine,hile,maligranda}
in the sense that the above bounds for the $p$-angular distance were expressed in terms of the $q$-angular distance.

 In addition, some mutual bounds for the  $p$-angular distance based on the  triangle inequality have been derived by Dragomir in \cite{dragomirMIA}, which will be discussed in the next section.

The main objective of the present paper is to establish some new bounds for the  $p$-angular distance and to provide new characterizations of inner product spaces.
After this introduction, in Section \ref{sec2} we establish mutual bounds for the $p$-angular distance, based on a suitable refinement of the triangle inequality. In particular, we show that our
estimates are more accurate than those in (\ref{erstemali}), (\ref{obicni hile}), and  the corresponding ones in \cite{dragomirMIA}.
In Section \ref{sec3} we provide some characterizations of inner product spaces which are not covered by Theorem A. First, we establish the inequality that seems familiar to the Hile inequality (\ref{obicni hile}),
although it shows significantly different behavior since it characterizes an inner product space. Finally, we  give several characterizations of an inner product space which are established through the connection between
mutual bounds for $\alpha[x,y]$ given by (\ref{obostranoao}), and the explicit formula for the  $p$-angular distance (\ref{zap}), when $p=0$. It should be noted here that our results are mainly inspired by recent papers \cite{rocky, rooin}.

\section{More accurate bounds for the  $p$-angular distance in a normed linear space}\label{sec2}

The main objective of this section is to establish some new mutual bounds for the $p$-angular distance in an arbitrary normed linear space, and to compare them
with some previously known results. Our new estimates are based on a suitable improvement of the  triangle inequality.

We start our discussion with the  lower bounds that correspond to inequalities in  (\ref{erstemali}).
Namely, Rooin et al. \cite{rooin}, derived  mutual bounds for angular distance $\alpha_p[x,y]$, expressed in terms of angular distance
$\alpha_q[x,y]$, by using a similar method as it has been done in \cite{maligranda}. Consequently, they obtained the  lower bounds for the $p$-angular distance
that correspond to inequalities in (\ref{erstemali}). However, the lower bounds for  the  $p$-angular distance can be established simply by transforming
inequalities in (\ref{erstemali}). Therefore, for the reader's convenience, we establish lower bounds that correspond to (\ref{erstemali}), with an
alternative proof.
\begin{corollary}[Rooin et al. \cite{rooin}]\label{korolarcic}
Let $X=\left(X, \left\Vert \cdot \right\Vert \right)$ be a  normed linear space. Then the inequalities
\begin{equation}\label{malkonz1}
\alpha_p[x,y]\geq \left\{\begin{array}{lr}
\frac{p}{2p-1} \max \!^{p-1}\{\|x\|, \|y\|\}\|x-y\|, &  p\geq 1, \\
{p}{\max \!^{p-1}\{\|x\|, \|y\|\}}\|x-y\|, &  0\leq p<1,\\
  \frac{p}{2p-1} \frac{\min \!^{p}\{\|x\|, \|y\|\}}{\max\{\|x\|,\|y\| \}}\|x-y\|       ,& p<0,
\end{array} \right.
\end{equation}
hold for all  nonzero vectors $x,y\in X$.
\end{corollary}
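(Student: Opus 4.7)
The plan is to derive each lower bound in (\ref{malkonz1}) directly from the corresponding upper bound in (\ref{erstemali}), by means of a substitution that interchanges the roles of $\alpha_p$ and $\alpha_{1/p}$. Given nonzero $x,y\in X$ and $p\neq 0$, I would set $u=\|x\|^{p-1}x$ and $v=\|y\|^{p-1}y$, so that $\|u\|=\|x\|^{p}$, $\|v\|=\|y\|^{p}$, and $\|u-v\|=\alpha_p[x,y]$. Since $\|u\|^{1/p-1}u=\|x\|^{p(1/p-1)}\|x\|^{p-1}x=x$, and similarly $\|v\|^{1/p-1}v=y$, one obtains $\alpha_{1/p}[u,v]=\|x-y\|$. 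Applying (\ref{erstemali}) to the pair $(u,v)$ with exponent $1/p$ therefore yields an upper bound on $\|x-y\|$ in terms of $\alpha_p[x,y]$ and the norms $\|x\|,\|y\|$, which immediately rearranges into the desired lower bound for $\alpha_p[x,y]$.

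The three regimes in (\ref{malkonz1}) match the three cases of (\ref{erstemali}) via the sign of $1/p$. If $p>1$, then $1/p\in(0,1)$, the middle case of (\ref{erstemali}) applies, and after rearrangement the coefficient $1/(2-1/p)=p/(2p-1)$ emerges, together with the factor $\max^{p-1}\{\|x\|,\|y\|\}$ (obtained from $\max^{1/p-1}\{\|x\|^p,\|y\|^p\}=\max^{1-p}\{\|x\|,\|y\|\}$). If $0<p<1$, then $1/p>1$, the first case of (\ref{erstemali}) applies, and by the same bookkeeping the coefficient $1/(1/p)=p$ appears with the same max-factor. If $p<0$, then $1/p<0$ and the third case of (\ref{erstemali}) is the relevant one. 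The endpoint values $p=0$ (where the claim reduces to $\alpha_0[x,y]\geq 0$) and $p=1$ (where both sides equal $\|x-y\|$) are trivial.

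The main obstacle, modest as it is, lies in the negative-exponent regime $p<0$: here exponentiation by $p$ reverses order, so one must use $\max\{\|u\|,\|v\|\}=\min^{p}\{\|x\|,\|y\|\}$ and $\min\{\|u\|,\|v\|\}=\max^{p}\{\|x\|,\|y\|\}$. Tracking these swaps yields the identity
\begin{equation*}
\frac{\min^{1/p}\{\|u\|,\|v\|\}}{\max\{\|u\|,\|v\|\}}=\frac{\max\{\|x\|,\|y\|\}}{\min^{p}\{\|x\|,\|y\|\}},
\end{equation*}
after which the third case of (\ref{erstemali}) rearranges into precisely the claimed inequality with coefficient $p/(2p-1)$. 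Once this sign-sensitive bookkeeping is performed, no further ideas are required, and each lower bound is literally a one-line consequence of the corresponding Maligranda estimate.
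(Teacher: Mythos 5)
Your proposal is correct and follows essentially the same route as the paper: both derive each lower bound by applying the corresponding Maligranda upper bound in (\ref{erstemali}) with exponent $\frac{1}{p}$ to the substituted vectors $\|x\|^{p-1}x$ and $\|y\|^{p-1}y$, using $\alpha_{1/p}\big[\|x\|^{p-1}x,\|y\|^{p-1}y\big]=\|x-y\|$ and then rearranging. The sign-sensitive bookkeeping you flag for $p<0$ (the swap of $\min$ and $\max$ under negative exponents) is exactly the point the paper's third case relies on, and your coefficient computations $1/(2-\tfrac{1}{p})=\tfrac{p}{2p-1}$ match.
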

\begin{proof}[An alternative proof:]  Let $p\geq 1$. Rewriting the second inequality in (\ref{erstemali}) with $\frac{1}{p}$ instead of $p$,
we obtain
$$
\alpha_\frac{1}{p}[x,y]\leq \frac{2p-1}{p}\cdot\frac{\|x-y\|}{\max^{1-\frac{1}{p}}\{ \|x\|, \|y\| \}}.
$$
Now, since
 $$\alpha_\frac{1}{p}[\|x\|^{p-1}x,\|y\|^{p-1}y]=\big\|\|x\|^{p(\frac{1}{p}-1)}\|x\|^{p-1}x-\|y\|^{p(\frac{1}{p}-1)}\|y\|^{p-1}y\big\|=\|x-y\|,$$
  substituting $x'=\|x\|^{p-1}x$ and $y'=\|y\|^{p-1}y$ instead of $x$ and $y$, respectively,  in the last inequality, we obtain
$$
\|x-y\|\leq \frac{2p-1}{p}\cdot\frac{\alpha_p[x,y]}{\max^{p-1}\{ \|x\|, \|y\| \}},
$$
which yields the first inequality  in (\ref{malkonz1}).

The second inequality in (\ref{malkonz1}) holds trivially for $p=0$.
 To prove that the second inequality  in (\ref{malkonz1}) holds for $0<p<1$, we utilize the first inequality in (\ref{erstemali}) rewritten with $\frac{1}{p}$ instead of $p$, and with $x'=\|x\|^{p-1}x$ and $y'=\|y\|^{p-1}y$ instead of $x$ and $y$, respectively.
 Similarly to above, we obtain
 $$
 \|x-y\|\leq \frac{\max^{1-p}\{\|x\|,\|y\|\}}{p}\alpha_p[x,y],
 $$
 which yields the  second inequality in (\ref{malkonz1}). In the same way, the third inequality in (\ref{malkonz1}) follows from the third inequality in (\ref{erstemali}) rewritten with parameter $\frac{1}{p}$ and with vectors
 $x'=\|x\|^{p-1}x$ and $y'=\|y\|^{p-1}y$ instead of $x$ and $y$, respectively.
\end{proof}

\begin{remark}
By  the same method as in the proof of Corollary \ref{korolarcic}, inequalities in (\ref{erstemali}) and (\ref{malkonz1}) can be transformed into mutual bounds expressed in terms of
$\alpha_q[x,y]$, as it has been done in \cite{rooin}. For example, consider the first inequality in (\ref{malkonz1}) with $\frac{p}{q}\geq 1$ instead of $p$:
$$
\alpha_\frac{p}{q}[x,y]\geq \frac{p}{2p-q} \max \!^{\frac{p}{q}-1}\{\|x\|, \|y\|\}\|x-y\|.
$$
Now, with $x''=\|x\|^{q-1}x$ and $y''=\|y\|^{q-1}x$, instead of $x$ and $y$ respectively, the above inequality transforms to
$$
\alpha_{p}[x,y]\geq \frac{p}{2p-q}\max\{\|x\|^{p-q}, \|y\|^{p-q}  \}\alpha_q[x,y],
$$
since $\alpha_{p/q}[x'',y'']=\alpha_p[x,y]$. For more details about this class of inequalities the reader is referred to \cite{rooin}.
 Obviously, the last inequality is equivalent to the first inequality in (\ref{malkonz1}), so in this section we will only consider
inequalities in (\ref{erstemali}) and (\ref{malkonz1}).
\end{remark}

Now, our goal is to derive more accurate  bounds for the  $p$-angular distance than those in (\ref{erstemali}) and (\ref{malkonz1}). We start with establishing  a suitable refinement and reverse of the triangle inequality.

\begin{theorem}\label{teoremcic}
Let  $X=\left(X, \left\Vert \cdot \right\Vert \right)$ be a  normed linear space and let $f:\mathbb{R}^{+}\rightarrow \mathbb{R}$ be a nonnegative function. Then the inequalities
\begin{equation}
\begin{split}\label{minimamaksima}
&\  f\left(\| x \|\right)\|x\|+f\left(\| y \|\right)\|y\|-\left(\|x\|+\|y\|-\|x-y\|  \right)\max\{f\left(\| x \|\right), f\left(\| y \|\right)\}\\
\leq &\  \left\|f\left(\| x \|\right)x-f\left(\| y \|\right)y \right\|\\
\leq & \ f\left(\| x \|\right)\|x\|+f\left(\| y \|\right)\|y\|-\left(\|x\|+\|y\|-\|x-y\|  \right)\min\{f\left(\| x \|\right), f\left(\| y \|\right)\}
\end{split}
\end{equation}
hold for all nonzero vectors $x,y\in X$.
\end{theorem}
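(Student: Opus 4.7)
The plan is to reduce both inequalities in (\ref{minimamaksima}) to two well-chosen applications of the ordinary triangle inequality. The statement is symmetric under the interchange $x\leftrightarrow y$, so it suffices to treat the case $f(\|x\|)\leq f(\|y\|)$. To streamline the notation I set $a=f(\|x\|)$ and $b=f(\|y\|)$; under this assumption one has $\min\{a,b\}=a$ and $\max\{a,b\}=b$, and the hypothesis that $f$ is nonnegative yields $a,b\geq 0$, which is what allows scalars to be pulled out of norms without sign issues.

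For the upper bound I would decompose
$$ax - by = a(x-y) - (b-a)\,y$$
and invoke the triangle inequality to obtain $\|ax-by\|\leq a\|x-y\| + (b-a)\|y\|$. A direct expansion identifies the right-hand side with $a\|x\|+b\|y\| - (\|x\|+\|y\|-\|x-y\|)\,a$, which is precisely the claimed upper bound under the convention $\min\{a,b\}=a$.

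For the lower bound I would instead use the dual decomposition
$$b(x-y) = (b-a)\,x + (ax - by),$$
apply the triangle inequality to it to get $b\|x-y\|\leq (b-a)\|x\| + \|ax-by\|$, and rearrange to $\|ax-by\|\geq b\|x-y\| - (b-a)\|x\|$. An analogous algebraic expansion shows that this right-hand side coincides with $a\|x\|+b\|y\| - (\|x\|+\|y\|-\|x-y\|)\,b$, matching the claimed lower bound under $\max\{a,b\}=b$. Note that no positivity is needed on the resulting expression itself: if it happens to be negative, the estimate is trivially valid since $\|ax-by\|\geq 0$.

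The only real obstacle is selecting the correct algebraic splittings: one must isolate the smaller coefficient $a$ inside $ax-by$ to obtain the upper bound, and use the dual identity on $b(x-y)$ to isolate the larger coefficient $b$ for the lower bound. Once these two identities are in hand the proof consists of a single application of the triangle inequality to each, followed by an elementary rearrangement to bring the result into the symmetric form displayed in (\ref{minimamaksima}).
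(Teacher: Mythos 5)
Your proposal is correct and is essentially the paper's own argument: both proofs fix a WLOG ordering of $f(\|x\|)$ and $f(\|y\|)$ and reduce each of the two bounds to a single application of the triangle inequality with exactly the decompositions you describe (smaller coefficient times $x-y$ for the upper bound, larger coefficient times $x-y$ for the lower bound). The only cosmetic difference is the choice of which of $f(\|x\|),f(\|y\|)$ is assumed larger.
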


\begin{proof}
Without loss of generality, we can suppose that $f\left(\| x \|\right)\geq f\left(\| y \|\right)$. In this case, the left inequality in (\ref{minimamaksima})
reduces to
$$
f\left(\| y \|\right)\|y\|-f\left(\| x \|\right)\|y\|+\|f\left(\| x \|\right)x-f\left(\| x \|\right)y\|\leq \left\|f\left(\| x \|\right)x-f\left(\| y \|\right)y \right\|.
$$
Clearly, since $f\left(\| x \|\right)\geq f\left(\| y \|\right)$, the last inequality is equivalent to
$$
\|f\left(\| x \|\right)x-f\left(\| x \|\right)y\|\leq \left\|f\left(\| x \|\right)x-f\left(\| y \|\right)y \right\|+\left\| f\left(\| y \|\right)y-f\left(\| x \|\right)y \right\|,
$$
which  represents the triangle inequality. Similarly, the second inequality in (\ref{minimamaksima}) reduces to
$$
 \left\|f\left(\| x \|\right)x-f\left(\| y \|\right)y \right\|\leq  f\left(\| x \|\right)\|x\|- f\left(\| y \|\right)\|x\|+\| f\left(\| y \|\right)x-f\left(\| y \|\right)y \|,
$$
which is again equivalent to triangle inequality
$$
 \left\|f\left(\| x \|\right)x-f\left(\| y \|\right)y \right\|\leq  \| f\left(\| x \|\right)x-f\left(\| y \|\right)x \|+\| f\left(\| y \|\right)x-f\left(\| y \|\right)y \|.
$$
The proof is now complete.
\end{proof}

With a suitable choice of a nonnegative function $f$, the
 previous theorem provides mutual bounds for the $p$-angular distance.

\begin{corollary}\label{korolarcicjos}
Let  $X=\left(X, \left\Vert \cdot \right\Vert \right)$ be a  normed linear space and let $p\geq 1$. Then the inequalities
\begin{equation}
\begin{split}\label{alfa-p}
&\  \|x\|^{p}+\|y\|^{p}-\left(\|x\|+\|y\|-\|x-y\|  \right)\max\! {}^{p-1}\{\|x\|,\|y\|\}\\
\leq &\  \alpha_p[x,y]\\
\leq & \ \|x\|^{p}+\|y\|^{p}-\left(\|x\|+\|y\|-\|x-y\|  \right)\min\! {}^{p-1}\{\|x\|,\|y\|\}
\end{split}
\end{equation}
hold for all nonzero elements $x,y\in X$. If $p<1$, then the inequalities in {\rm(\ref{alfa-p})} are reversed.
\end{corollary}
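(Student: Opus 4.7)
The plan is to derive Corollary~\ref{korolarcicjos} as a direct specialization of Theorem~\ref{teoremcic}, obtained by choosing the nonnegative function $f(t)=t^{p-1}$. Since $x,y$ are nonzero, the norms $\|x\|$ and $\|y\|$ lie in $(0,\infty)$, so $f$ takes only nonnegative values on the arguments that appear, and Theorem~\ref{teoremcic} is applicable for every real $p$. Under this choice a straightforward substitution gives $f(\|x\|)\|x\|+f(\|y\|)\|y\|=\|x\|^{p}+\|y\|^{p}$ and $\|f(\|x\|)x-f(\|y\|)y\|=\alpha_{p}[x,y]$, so the task reduces to identifying $\max\{\|x\|^{p-1},\|y\|^{p-1}\}$ and $\min\{\|x\|^{p-1},\|y\|^{p-1}\}$ in terms of $\max\{\|x\|,\|y\|\}$ and $\min\{\|x\|,\|y\|\}$.

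Next I would split into two cases according to the monotonicity of $t\mapsto t^{p-1}$ on $(0,\infty)$. For $p\geq 1$ the exponent $p-1$ is nonnegative, so the map is nondecreasing, whence $\max\{\|x\|^{p-1},\|y\|^{p-1}\}=\max{}^{p-1}\{\|x\|,\|y\|\}$ and $\min\{\|x\|^{p-1},\|y\|^{p-1}\}=\min{}^{p-1}\{\|x\|,\|y\|\}$; plugging these into (\ref{minimamaksima}) reproduces (\ref{alfa-p}) verbatim. For $p<1$ the exponent $p-1$ is negative, so $t\mapsto t^{p-1}$ is strictly decreasing and the above identifications are swapped: now $\max\{\|x\|^{p-1},\|y\|^{p-1}\}=\min{}^{p-1}\{\|x\|,\|y\|\}$ and $\min\{\|x\|^{p-1},\|y\|^{p-1}\}=\max{}^{p-1}\{\|x\|,\|y\|\}$. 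Substituting into (\ref{minimamaksima}) yields bounds in which $\min$ and $\max$ are interchanged relative to (\ref{alfa-p}), which is precisely the assertion that both inequalities in (\ref{alfa-p}) are reversed.

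There is essentially no genuine obstacle here: the whole argument is a substitution followed by a monotonicity observation, and the only point that demands a little care is tracking the direction of monotonicity of $t\mapsto t^{p-1}$ across the threshold $p=1$, which is exactly what separates the two cases of the corollary.
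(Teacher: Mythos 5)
Your proposal is correct and follows exactly the paper's own route: the authors likewise obtain Corollary~\ref{korolarcicjos} by putting $f(t)=t^{p-1}$ in Theorem~\ref{teoremcic} and observing that this function is increasing for $p\geq 1$ and decreasing for $p<1$, which swaps the roles of $\min$ and $\max$ and reverses the inequalities. Your write-up simply spells out the substitution in more detail than the paper's one-line proof.
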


\begin{proof}
It follows from Theorem \ref{teoremcic} by putting $f(t)=t^{p-1}$ and noting  that $f$ is increasing (decreasing) for $p\geq 1$ ($p<1$).
\end{proof}

\begin{remark}
If $p=0$, then, taking into account obvious relations
\begin{equation*}
\begin{split}
2\min\{\Vert x\Vert,\Vert y\Vert\}=&\Vert x\Vert + \Vert y\Vert-|\Vert x\Vert - \Vert y\Vert|,\\
2\max\{\Vert x\Vert,\Vert y\Vert\}=&\Vert x\Vert + \Vert y\Vert+|\Vert x\Vert - \Vert y\Vert|,
\end{split}
\end{equation*}
Corollary \ref{korolarcicjos} provides  mutual bounds for angular distance $\alpha[x,y]$, given by (\ref{obostranoao}).
\end{remark}

In 2009, Dragomir \cite{dragomirMIA}, established the following two pairs of upper and lower bounds for the  $p$-angular distance:
\begin{equation}\label{dragomirove-gornje}
\alpha_p[x,y]\leq \left\{\begin{array}{lr}
D_{\geq 1}(p), &  p\geq 1, \\
D_{<1}(p), &  p<1,
\end{array} \right.\quad \mathrm{and} \qquad \alpha_p[x,y]\leq \left\{\begin{array}{lr}
S_{\geq 1}(p), &  p\geq 1, \\
S_{<1}(p), &  p<1,
\end{array} \right.
\end{equation}
where
\begin{equation*}
\begin{split}
D_{\geq 1}(p)&=\|x-y\|\min{}^{p-1}\{\|x\|,\|y\|\}+\big|\|x\|^{p-1} -\|y\|^{p-1} \big|\max\{\|x\|,\|y\|\},\\
D_{<1}(p)&=\|x-y\|\max{}^{p-1}\{\|x\|,\|y\|\}+\big|\|x\|^{p-1} -\|y\|^{p-1} \big|\max\{\|x\|,\|y\|\},\\
S_{\geq 1}(p)&=\|x-y\|\max{}^{p-1}\{\|x\|,\|y\|\}+\big|\|x\|^{p-1} -\|y\|^{p-1} \big|\min\{\|x\|,\|y\|\},\\
S_{< 1}(p)&=\|x-y\|\min{}^{p-1}\{\|x\|,\|y\|\}+\big|\|x\|^{p-1} -\|y\|^{p-1} \big|\min\{\|x\|,\|y\|\},
\end{split}
\end{equation*}
and
\begin{equation}\label{dragomirove-donje}
\alpha_p[x,y]\geq \left\{\begin{array}{lr}
d_{\geq 1}(p), &  p\geq 1, \\
d_{<1}(p), &  p<1,
\end{array} \right.\quad \mathrm{and} \qquad \alpha_p[x,y]\geq \left\{\begin{array}{lr}
s_{\geq 1}(p), &  p\geq 1, \\
s_{<1}(p), &  p<1,
\end{array} \right.
\end{equation}
where
\begin{equation*}
\begin{split}
d_{\geq 1}(p)&=\|x-y\|\max{}^{p-1}\{\|x\|,\|y\|\}-\big|\|x\|^{p-1} -\|y\|^{p-1} \big|\max\{\|x\|,\|y\|\},\\
d_{< 1}(p)&=\|x-y\|\min{}^{p-1}\{\|x\|,\|y\|\}-\big|\|x\|^{p-1} -\|y\|^{p-1} \big|\max\{\|x\|,\|y\|\},\\
s_{\geq 1}(p)&=\|x-y\|\min{}^{p-1}\{\|x\|,\|y\|\}-\big|\|x\|^{p-1} -\|y\|^{p-1} \big|\min\{\|x\|,\|y\|\},\\
s_{< 1}(p)&=\|x-y\|\max{}^{p-1}\{\|x\|,\|y\|\}-\big|\|x\|^{p-1} -\|y\|^{p-1} \big|\min\{\|x\|,\|y\|\}.
\end{split}
\end{equation*}
The estimates in (\ref{dragomirove-gornje}) and (\ref{dragomirove-donje}) were also  established as a consequences of  certain variants of triangle inequalities,
and are given here in a more suitable form. In addition, those estimates have not been mutually compared in \cite{dragomirMIA}. Now, we will compare our bounds from Corollary \ref{korolarcicjos}
with  the above estimates. We will show that our bounds coincide with the above estimates in two cases, while in  remaining cases our estimates are more precise. Following the notation as in
(\ref{dragomirove-gornje}) and (\ref{dragomirove-donje}), the upper bounds given by (\ref{alfa-p}) and its reverse will be denoted by $K_{\geq 1}(p)$ and $K_{< 1}(p)$, while the lower bounds will be denoted by
$k_{\geq 1}(p)$ and $k_{< 1}(p)$. Here, the indices indicate the corresponding range of parameter $p$.
Note also that $K_{\geq 1}(p)=k_{< 1}(p)$, as well as $K_{< 1}(p)=k_{\geq 1}(p)$.

 \begin{theorem}
Let $x,y\in X$ be nonzero vectors. If $p\geq 1$, then holds the relation
\begin{equation*}
s_{\geq 1}(p)\leq d_{\geq 1}(p)\leq k_{\geq 1}(p)\leq \alpha_p[x,y]\leq K_{\geq 1}(p)=D_{\geq 1}(p)\leq S_{\geq 1}(p),
\end{equation*}
while for $p<1$ holds
\begin{equation*}
s_{< 1}(p)\leq d_{< 1}(p)= k_{< 1}(p)\leq \alpha_p[x,y]\leq K_{< 1}(p)\leq D_{< 1}(p)\leq S_{< 1}(p).
\end{equation*}
\end{theorem}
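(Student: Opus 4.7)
My plan is to verify each link in the two chains by first reducing to the case $\|x\|\geq \|y\|$, which is permitted since every quantity appearing in both chains is symmetric in $x$ and $y$. Under this ordering we have $\min\{\|x\|,\|y\|\}=\|y\|$ and $\max\{\|x\|,\|y\|\}=\|x\|$, and each of the eight quantities $D,S,d,s$ together with $K$ and $k$ becomes a concrete expression in $\|x\|,\|y\|,\|x-y\|$ and the difference $\|x\|^{p-1}-\|y\|^{p-1}$.

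The middle links $k_{\geq 1}(p)\leq \alpha_p[x,y]\leq K_{\geq 1}(p)$ and $k_{<1}(p)\leq \alpha_p[x,y]\leq K_{<1}(p)$ are exactly Corollary~\ref{korolarcicjos}, so only the outer links and the two asserted identities remain. To dispatch the identities $K_{\geq 1}(p)=D_{\geq 1}(p)$ and $d_{<1}(p)=k_{<1}(p)$, I would simply expand both sides: under the WLOG ordering each of these four expressions collapses to
\begin{equation*}
\|x\|^{p}-\|x\|\,\|y\|^{p-1}+\|x-y\|\,\|y\|^{p-1}.
\end{equation*}

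For every remaining comparison, a short expansion shows that the relevant difference equals either
\begin{equation*}
(\|x\|^{p-1}-\|y\|^{p-1})\bigl(\|x-y\|-(\|x\|-\|y\|)\bigr)\qquad\text{or}\qquad (\|x\|-\|y\|)(\|x\|^{p-1}-\|y\|^{p-1}),
\end{equation*}
up to an overall sign. Under the WLOG ordering, $\|x\|-\|y\|\geq 0$, and the reverse triangle inequality gives $\|x-y\|-(\|x\|-\|y\|)\geq 0$; the sign of the factor $\|x\|^{p-1}-\|y\|^{p-1}$ flips between the regimes $p\geq 1$ and $p<1$, which is exactly what aligns each link with the direction asserted in the corresponding chain. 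Concretely, $d_{\geq 1}-s_{\geq 1}$, $S_{\geq 1}-D_{\geq 1}$, $d_{<1}-s_{<1}$, and $S_{<1}-D_{<1}$ all fit the first template, while $k_{\geq 1}-d_{\geq 1}$ and $D_{<1}-K_{<1}$ fit the second.

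The main obstacle is not conceptual but organizational: since $t\mapsto t^{p-1}$ is increasing for $p\geq 1$ but decreasing for $p<1$, the roles of $\|x\|^{p-1}$ and $\|y\|^{p-1}$ as minimum and maximum of the two exponentiated norms swap between the two regimes, even though the ordering $\|x\|\geq\|y\|$ of the norms themselves is fixed. Standardising every $\min^{p-1}/\max^{p-1}$ under that ordering before computing the six nontrivial differences is the only place where a sign error is easy to make, but once this is done the two factorisations above dispose of every remaining link in both chains.
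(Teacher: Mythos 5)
Your proposal is correct and follows essentially the same route as the paper: a WLOG ordering of $\|x\|$ and $\|y\|$, the middle links supplied by Corollary \ref{korolarcicjos}, the observation that $K_{\geq 1}(p)$, $D_{\geq 1}(p)$, $d_{<1}(p)$, $k_{<1}(p)$ all collapse to the same expression, and factorisation of each remaining difference into $(\|x\|^{p-1}-\|y\|^{p-1})$ times either $\|x-y\|-|\|x\|-\|y\||\geq 0$ or $\|x\|-\|y\|$, with the sign controlled by the monotonicity of $t\mapsto t^{p-1}$. The paper's proof is exactly this computation (with the opposite ordering convention), so no further comment is needed.
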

\begin{proof}
Without loss of generality we assume that $\|x\|\leq \|y\|$. Then, it follows that
$$D_{\geq 1}(p)=K_{\geq 1}(p)=d_{<1}(p)=k_{<1}(p)=\|y\|^{p}-\|x\|^{p-1}\|y\|+\|x-y\|\|x\|^{p-1}.$$ In addition, taking into account  the triangle inequality and  the fact that
the function $f(t)=t^{\alpha}$ is increasing (decreasing) for $\alpha\geq 0$ ($\alpha<0$), we obtain the following relations:
\begin{equation*}
\begin{split}
S_{\geq 1}(p)\!-\!D_{\geq 1}(p)=d_{\geq 1}(p)\!-\!s_{\geq 1}(p)&\!=( \|y\|^{p-1}- \|x\|^{p-1} )(\|x-y\|+\|x\|-\|y\|)\geq 0,\\
S_{< 1}(p)\!-\!D_{< 1}(p)=d_{< 1}(p)\!-\!s_{< 1}(p)&\!=( \|x\|^{p-1}- \|y\|^{p-1} )(\|x-y\|+\|x\|-\|y\|)\geq 0,
\end{split}
\end{equation*}
and
\begin{equation*}
\begin{split}
k_{\geq 1}(p)- d_{\geq 1}(p)&=(\|y\|-\|x\|)\big(\|y\|^{p-1}-\|x\|^{p-1}  \big)\geq 0,\\
D_{< 1}(p)- K_{< 1}(p)&=(\|y\|-\|x\|)\big(\|x\|^{p-1}-\|y\|^{p-1}  \big)\geq 0.
\end{split}
\end{equation*}
 The proof is now complete.
\end{proof}

\begin{remark}\label{positivity}
Although our lower bounds  $k_{\geq 1}(p)$ and $k_{< 1}(p)$ are better than the corresponding ones in (\ref{dragomirove-donje}), they are not always meaningful since they can take negative values.
Obviously,  their sign depends on parameter $p$, as well as on vectors $x,y\in X$. However, we are interested in finding values of $p$ for which $k_{\geq 1}(p)$ and $k_{< 1}(p)$ are nonnegative,
regardless of the choice of nonzero vectors $x,y\in X$.

 We first show that $k_{\geq 1}(p)\geq 0$ for $p\in [1,2]$. Without loss of generality we suppose that $\|x\|\leq\|y\|$. Then,
$$k_{\geq 1}(p)=\|x\|^{p}-\|y\|^{p-1}\|x\|+\|y\|^{p-1}\|x-y\|=\|y\|^{p-1}\varphi(p),$$
where
$$
\varphi(p)=\|x\|\left(\frac{\|x\|}{\|y\|}  \right)^{p-1}-\|x\|+\|x-y\|
$$
is a decreasing function. Then, utilizing the triangle inequality, we have
$$
\varphi(2)=\frac{\|x\|^{2}-\|x\|\|y\|+\|x-y\|\|y\|}{\|y\|}\geq \frac{\|x\|}{\|y\|}\big(\|x\|-\|y\|+\|x-y\|  \big)\geq 0.
$$
This implies that $k_{\geq 1}(p)\geq 0$ for $p\in [1,2]$. It is easy to find vectors $x,y\in X$ for which  $k_{\geq 1}(p)$ is negative, when $p>2$. To see this, let $\|x\|=n$, $\|y\|=n+1$ and $\|x-y\|=1+\frac{1}{n}$, $n\in \mathbb{N}$.
Then, $\varphi_n(p)=n\big( \frac{n}{n+1} \big)^{p-1}-n+1+\frac{1}{n}$, so by utilizing the L'Hospital rule it follows that $\lim_{n\rightarrow\infty}\varphi_n(p)=2-p<0$, which proves our assertion.

 Similarly, we show that $k_{< 1}(p)\geq 0$ for $p\in [0,1)$. Without loss of generality we suppose that $\|x\|\leq\|y\|$. Then,
$$k_{< 1}(p)=\|y\|^{p}-\|x\|^{p-1}\|y\|+\|x\|^{p-1}\|x-y\|=\|x\|^{p-1}\psi(p),$$
where
$$
\psi(p)=\|y\|\left(\frac{\|y\|}{\|x\|}  \right)^{p-1}-\|y\|+\|x-y\|
$$
is an  increasing function. We have, $\psi(0)=\|x\|-\|y\|+\|x-y\|\geq 0$, by the triangle inequality, which implies that $k_{< 1}(p)\geq 0$ for $p\in [0,1)$. Moreover, if
$x,y\in X$ are such that $\|x\|=n$, $\|y\|=n+1$, $\|x-y\|=1+\frac{1}{n}$, $n\in \mathbb{N}$, then, $\psi_n(p)=(n+1)\big( \frac{n+1}{n} \big)^{p-1}-n+\frac{1}{n}$
and $\lim_{n\rightarrow\infty}\psi_n(p)=p$, by virtue of the L'Hospital rule. This shows that if $p<0$, then $k_{< 1}(p)$ can take negative values. In conclusion, the  lower
bounds given by Corollary \ref{korolarcicjos} will be considered only for $p\in[0,2]$.
\end{remark}

The upper bounds in (\ref{dragomirove-gornje}) were roughly compared in \cite{dragomirMIA} with the Maligranda upper bounds given by  (\ref{erstemali}). In particular, it was shown  that one of the estimates
in (\ref{dragomirove-gornje}) is better  than (\ref{erstemali}) in the case when $p\geq 1$ (for more details, see \cite{dragomirMIA}).

Now, by a more precise analysis, we will make comparison of our Corollary \ref{korolarcicjos} with the Maligranda upper bounds in (\ref{erstemali}) and the lower bounds provided by Corollary \ref{korolarcic}. We will prove that the upper bounds in (\ref{alfa-p}) and its reverse are always better than the corresponding estimates in (\ref{erstemali}). In addition, we will show that if $0\leq p<1$, then the lower bound $k_{< 1}(p)$ in the reverse of (\ref{alfa-p}) is more precise than
the corresponding one in (\ref{malkonz1}).
Following the notation as above, the upper bounds in (\ref{erstemali}) will be denoted  by $M_{\geq 1}(p)$, $M_{[0,1)}(p)$, and $ M_{<0}(p)$, respectively, while the lower
bounds in (\ref{malkonz1}) will be denoted by $m_{\geq 1}(p)$, $m_{[0,1)}(p)$, and $m_{<0}(p)$.

\begin{theorem}
If $x,y\in X$ are nonzero vectors, then
\begin{equation}\label{moje boje up}
\alpha_p[x,y]\leq \left\{\begin{array}{lr}
 K_{\geq 1}(p)\leq  M_{\geq 1}(p), &  p\geq 1, \\
 K_{< 1}(p)\leq  M_{[0,1)}(p), &  0\leq p<1,\\
 K_{< 1}(p)\leq  M_{<0}(p)       ,& p<0.
\end{array} \right.
\end{equation}
 In addition, if $0\leq p< 1$, then
 \begin{equation}\label{moje boje down}
  m_{[0,1)}(p)\leq k_{< 1}(p)\leq\alpha_p[x,y].
 \end{equation}
\end{theorem}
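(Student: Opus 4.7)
The plan is to observe first that Corollary \ref{korolarcicjos} already supplies $\alpha_p[x,y] \leq K_{\geq 1}(p)$ for $p \geq 1$, $\alpha_p[x,y] \leq K_{<1}(p)$ for $p < 1$, and $k_{<1}(p) \leq \alpha_p[x,y]$ for $0 \leq p < 1$. Hence the only remaining work is to establish four scalar comparisons: (A) $K_{\geq 1}(p) \leq M_{\geq 1}(p)$ for $p \geq 1$, (B) $K_{<1}(p) \leq M_{[0,1)}(p)$ for $0 \leq p < 1$, (C) $K_{<1}(p) \leq M_{<0}(p)$ for $p < 0$, and (D) $m_{[0,1)}(p) \leq k_{<1}(p)$ for $0 \leq p < 1$.

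All four reductions follow a uniform two-step strategy. Assume WLOG $a := \|x\| \leq \|y\| =: b$ and set $c := \|x-y\|$, so $b - a \leq c \leq a + b$. After dividing each inequality by a suitable power of $b$ and introducing $u = a/b \in (0, 1]$ and $s = c/b \in [1-u, 1+u]$, each inequality becomes linear in $s$ with an $s$-coefficient of fixed sign on this domain; hence the extremum in $s$ is realized at the triangle-equality boundary $s = 1 - u$. Substituting $s = 1 - u$ then collapses the problem to a one-variable inequality in $u$ on $(0, 1]$.

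Notably, inequalities (A), (B), and (D) all reduce after this substitution to statements involving the single function $\phi(u) := u^p + p(1-u)$: (A) asks for $\phi(u) \geq 1$ in the range $p \geq 1$, while (B) and (D) ask for $\phi(u) \leq 1$ in the range $0 \leq p < 1$. Since $\phi(1) = 1$ and $\phi'(u) = p(u^{p-1} - 1)$ has a fixed sign on $(0, 1]$ in either $p$-regime, all three follow at once from elementary monotonicity.

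The main obstacle is (C). After the same reduction one must prove $g(u) := (2-p)u^{p+1} - (1-p)u^p - 2u + 1 \leq 0$ for $p < 0$, $u \in (0, 1]$. A direct computation gives $g(1) = g'(1) = 0$ and $g''(u) = pu^{p-2}\bigl[(2-p)(p+1)u + (1-p)^2\bigr]$. The bracketed factor $B(u)$ is affine in $u$; since $B(0) = (1-p)^2 > 0$ and $B(1) = (2-p)(p+1) + (1-p)^2 = 3 - p > 0$, it is positive on $[0,1]$, so $g''(u) \leq 0$ there. Concavity of $g$ together with $g(1) = g'(1) = 0$ then forces $g \leq 0$ on $(0, 1]$, closing (C) and hence the theorem.
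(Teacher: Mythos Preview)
Your argument is correct and follows a genuinely different route from the paper's. The paper fixes the vectors and studies each difference as a function of $p$: for instance it writes $M_{\geq 1}(p)-K_{\geq 1}(p)=\|y\|^{p-1}f(p)$ with $f(p)=p\|x-y\|-\|y\|+(\|y\|-\|x-y\|)(\|x\|/\|y\|)^{p-1}$ and establishes $f(p)\geq 0$ via convexity in $p$, and for case (C) it needs a case split on the sign of $\|x\|-\|x-y\|$ together with a mean-value-theorem estimate for $\log(\|y\|/\|x\|)$ to show the auxiliary function is decreasing. You instead exploit the fact that all four differences are affine in $c=\|x-y\|$ to pass to the triangle-equality boundary $c=b-a$; this collapses (A), (B), (D) simultaneously to the Bernoulli inequality $u^{p}+p(1-u)\gtrless 1$ and reduces (C) to a clean concavity argument in the single variable $u$, with no case analysis and no logarithmic estimate. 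Your route is more unified and somewhat more elementary; the only point that deserves an explicit sentence is that in each of the four reductions the $s$-coefficient of the relevant difference is in fact \emph{nonnegative} (namely $p-u^{p-1}\geq 0$ in (A), $1-p>0$ in (B), $(2-p)a^{p}-b^{p}>0$ in (C), and $u^{p-1}-p>0$ in (D)), so that the minimum over $s\in[1-u,1+u]$ is attained at $s=1-u$ rather than at the other endpoint.
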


\begin{proof}
Without loss of generality we assume that $\|x\|\leq \|y\|$. We first prove that $K_{\geq 1}(p)\leq  M_{\geq 1}(p)$, for $p\geq 1$. Taking into account our assumption,
we have
\begin{equation*}
\begin{split}
K_{\geq 1}(p)=&\ \|y\|^{p}-\|x\|^{p-1}\|y\|+\|x\|^{p-1}\|x-y\|,\\
M_{\geq 1}(p)=&\ p\|y\|^{p-1}\|x-y\|,
\end{split}
\end{equation*}
 and  we will show that $M_{\geq 1}(p)-K_{\geq 1}(p)\geq 0$, for $p\geq 1$. Since $M_{\geq 1}(p)-K_{\geq 1}(p)=\|y\|^{p-1}f(p)$, where
 \begin{equation}\label{fja}
 f(p)=p\|x-y\|-\|y\|+\left(\|y\|- \|x-y\| \right)\left( \frac{\|x\|}{\|y\|} \right)^{p-1},
 \end{equation}
 it suffices to show that $f$ is nonnegative for $p\geq 1$. We consider two cases depending on whether $\|y\|- \|x-y\|>0$ or $\|y\|- \|x-y\|\leq 0$.
 If $\|y\|- \|x-y\|\leq 0$, then $f$ is represented as a sum of two increasing functions. Consequently,  $f$
   is an   increasing function which implies that $f(p)\geq f(1)=0$, for $p\geq 1$. Otherwise, by taking a second derivative of $f$, we obtain
 $$f''(p)=\left( \|y\|- \|x-y\| \right)\big( \frac{\|x\|}{\|y\|} \big)^{p-1}\log^{2}\frac{\|x\|}{\|y\|}\geq 0.$$ Hence,  if $\|y\|- \|x-y\|>0$, then $f$ is convex on $\mathbb{R}$.
 Consequently, since $f(0)=\frac{\|y\|}{\|x\|}\left(\|y\|-\|x\|-\|x-y\|  \right)\leq 0$, by the triangle inequality, and $f(1)=0$, it follows that $f(p)\geq 0$, for all $p\geq 1$. In conclusion,
 $M_{\geq 1}(p)-K_{\geq 1}(p)\geq 0$, for all $p\geq 1$, as claimed.

 Next we show that  $K_{< 1}(p)\leq  M_{[0,1)}(p)$, for $0\leq p<1$.
 In this setting, we have
 \begin{equation*}
\begin{split}
K_{< 1}(p)=&\ \|x\|^{p}-\|x\|\|y\|^{p-1}+\|y\|^{p-1}\|x-y\|,\\
M_{[0,1)}(p)=&\ (2-p)\|y\|^{p-1}\|x-y\|.
\end{split}
\end{equation*}
Since $M_{[0,1)}(p)-K_{< 1}(p)=\|y\|^{p-1}g(p)$,
where
$$
g(p)=(1-p)\|x-y\|-\|x\|\left( \frac{\|x\|}{\|y\|} \right)^{p-1}+\|x\|,
$$
it suffices to prove that $g(p)\geq 0$ for $0\leq p<1$. It should be noted  here that $g$ is concave  on $\mathbb{R}$ since
$$g''(p)=-\|x\|\big( \frac{\|x\|}{\|y\|} \big)^{p-1}\log^{2}\frac{\|x\|}{\|y\|}\leq 0.$$ Consequently, since $g(0)=\|x-y\|+\|x\|-\|y\|\geq 0$, by the triangle inequality, and $g(1)=0$,
it follows that $g(p)\geq 0$, for all $0\leq p<1$.

The last step in connection with  the upper bounds is to show that $M_{<0}(p)-K_{< 1}(p)\geq 0$, for $p<0$. Since
$$
M_{<0}(p)=(2-p)\frac{\|x\|^{p}}{\|y\|}\|x-y\|,
$$
it follows that $M_{<0}(p)-K_{< 1}(p)=\frac{\|x\|^{p}}{\|y\|}h(p)$,
where
$$
h(p)=(2-p)\|x-y\|-\|y\|+\big(\|x\|-\|x-y\|  \big)\left(\frac{\|y\|}{\|x\|}  \right)^{p}.
$$
We will show that $h$ is decreasing function for $p<0$. We consider two cases depending on whether $\|x\|-\|x-y\|< 0$ or $\|x\|-\|x-y\|\geq 0$. If $\|x\|-\|x-y\|< 0$,
then $h$ is decreasing  on $\mathbb{R}$, since it is represented as a sum of two decreasing functions. Otherwise, if $\|x\|-\|x-y\|\geq 0$, then, by taking a derivative of $h$,
we obtain
$$
h'(p)=-\|x-y\|+\big( \|x\|-\|x-y\| \big)\left(\frac{\|y\|}{\|x\|}  \right)^{p}\log \frac{\|y\|}{\|x\|}.
$$
Our intention is to show that $h'(p)\leq 0$, for $p\leq 0$. By the Lagrange mean value theorem, there exist $\xi\in (\|x\|,\|y\|)$ such that $\log\|y\|-\log\|x\|=\frac{\|y\|-\|x\|}{\xi}$,
and consequently,
$$
\log \frac{\|y\|}{\|x\|}\leq \frac{\|y\|-\|x\|}{\|x\|}\leq \frac{\|y-x\|}{\|x\|}.
$$
Furthermore, since $\big( \frac{\|y\|}{\|x\|} \big)^{p}\leq 1$,  for $p\leq 0$, it follows that
$$
h'(p)\leq -\frac{\|x-y\|^{2}}{\|x\|},
$$
so $h$ is decreasing function for $p\leq 0$ in each case. On the other hand, since $h(0)=\|x-y\|+\|x\|-\|y\|\geq 0$, it follows that $h(p)\geq h(0)=0$, for $p<0$, which completes the proof of the third inequality
in (\ref{moje boje up}).

It remains to prove (\ref{moje boje down}).  In other words, we need to show that $k_{< 1}(p)-m_{[0,1)}(p)\geq 0$, for $0\leq p<1$. Since $k_{< 1}(p)=K_{\geq 1}(p)$ and $m_{[0,1)}(p)=M_{\geq 1}(p)$,
it follows from the first part of the proof that $m_{[0,1)}(p)-k_{< 1}(p)=\|y\|^{p-1}f(p)$, where $f$ is defined by (\ref{fja}). We have already proved that $f$
is a  convex function such that $f(0)\leq 0$ and $f(1)=0$. This implies that $f(p)\leq 0$, for all $0\leq p<1$, and consequently, $k_{< 1}(p)-m_{[0,1)}(p)\geq 0$, as claimed. The proof is now complete.
\end{proof}

\begin{remark}
Note that the previous theorem gives no answer to the question about   comparison of the  lower bounds $k_{\geq 1}(p)$ and $m_{\geq 1}(p)$, when $p\in[1,2]$. Of course, if $p=1$ these bounds coincide, otherwise they are not comparable. To see this, note that the difference
$k_{\geq 1}(p)-m_{\geq 1}(p)$, provided that $\|x\|\leq \|y\|$,  can be rewritten as $k_{\geq 1}(p)-m_{\geq 1}(p)=\|y\|^{p-2}\xi(p)$, where
$$
\xi(p)=\|x\|\left(\frac{\|x\|}{\|y\|}  \right)^{p-1}-\|x\|+\frac{p-1}{2p-1}\|x-y\|.
$$
Now, if $\|x\|=\|y\|$, then $\xi(p)\geq 0$, for all $p\in (1,2]$. On the other hand, if $\|x\|=n$, $\|y\|=n+1$, $\|x-y\|=1+\frac{1}{n}$, $n\in \mathbb{N}$, then
$\xi_n(p)=n\big( \frac{n}{n+1} \big)^{p-1}-n+\frac{p-1}{2p-1}\big(1+\frac{1}{n} \big)$, and consequently $\lim_{n\rightarrow\infty}\xi_n(p)=-\frac{2(p-1)^{2}}{2p-1}< 0$, for all $p\in (1,2]$.
This means that for each $p\in (1,2]$, we can choose $x,y\in X$ such that $k_{\geq 1}(p)> m_{\geq 1}(p)$ or $k_{\geq 1}(p)< m_{\geq 1}(p)$.

Although the lower bounds $k_{\geq 1}(p)$ and $k_{< 1}(p)$ can be negative when $p$ does not belong to    interval $[0,2]$, they are still not comparable with bounds in  (\ref{malkonz1}).
 For example,  if $\|x\|=1$, $\|y\|=4$ and $\|x-y\|=4$, then
 $m_{<0}(-1)-k_{< 1}(-1)=0.08\dot{3}>0$, while for $\|x\|=1$, $\|y\|=4$ and $\|x-y\|=4.5$, we have $m_{<0}(-1)-k_{< 1}(-1)=-0.375<0$.
\end{remark}

We have already discussed that Dragomir \cite{ukraine},  showed that the Hile inequality (\ref{obicni hile}) is more precise
than the first inequality in (\ref{erstemali}),  when $p\geq 2$ (see also \cite{rooin}). However, it turns out that our inequality in (\ref{alfa-p}) is better than the Hile inequality (\ref{obicni hile}),
for all $p\geq 1$. For the reader's convenience, the right-hand side of inequality (\ref{obicni hile}) will be denoted by $H_{\geq 1}(p)$.

\begin{theorem}
If $x,y\in X$ are nonzero vectors and $p\geq 1$, then
\begin{equation}\label{boji od hilea}
\alpha_p[x,y]\leq K_{\geq 1}(p)\leq H_{\geq 1}(p).
\end{equation}
\end{theorem}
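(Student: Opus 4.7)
The first inequality $\alpha_p[x,y]\leq K_{\geq 1}(p)$ is exactly the upper estimate in Corollary \ref{korolarcicjos}, so the real content is the second inequality $K_{\geq 1}(p)\leq H_{\geq 1}(p)$. Since the Hile bound requires $\|x\|\neq\|y\|$, and the inequality is symmetric in $x,y$, I would assume without loss of generality that $\|x\|<\|y\|$, and for bookkeeping set $a=\|x\|$, $b=\|y\|$, $c=\|x-y\|$. In this regime $\min\{\|x\|,\|y\|\}=a$, so $K_{\geq 1}(p)=b^{p}-ba^{p-1}+ca^{p-1}$ while $H_{\geq 1}(p)=\frac{b^{p}-a^{p}}{b-a}\,c$.

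The plan is to compute the difference $H_{\geq 1}(p)-K_{\geq 1}(p)$ directly and to extract a product of manifestly nonnegative factors. First I would isolate the $c$-terms and use the elementary identity
\[
\frac{b^{p}-a^{p}}{b-a}-a^{p-1}=\frac{b^{p}-a^{p}-a^{p-1}(b-a)}{b-a}=\frac{b\bigl(b^{p-1}-a^{p-1}\bigr)}{b-a},
\]
which collapses the coefficient of $c$ nicely. The remaining constant-in-$c$ part is $-b^{p}+ba^{p-1}=-b(b^{p-1}-a^{p-1})$, so both pieces share the common factor $b(b^{p-1}-a^{p-1})$. Combining, one lands at the clean factorisation
\[
H_{\geq 1}(p)-K_{\geq 1}(p)=\frac{b\bigl(b^{p-1}-a^{p-1}\bigr)\bigl(c-(b-a)\bigr)}{b-a}.
\]

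From this the conclusion is immediate: $b>a>0$ by assumption, $b^{p-1}\geq a^{p-1}$ by monotonicity of $t\mapsto t^{p-1}$ for $p\geq 1$, and $c=\|x-y\|\geq\bigl|\|x\|-\|y\|\bigr|=b-a$ by the reverse triangle inequality, so each of the three factors is nonnegative. Hence $H_{\geq 1}(p)\geq K_{\geq 1}(p)$, which together with Corollary \ref{korolarcicjos} yields \eqref{boji od hilea}.

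I do not expect any serious obstacle; the only thing that could mildly trip one up is arranging the algebra so that the common factor $b(b^{p-1}-a^{p-1})$ appears, since a naive expansion leaves several terms that cancel only after the identity above is used. Once the factorisation is visible, positivity of each factor is standard: monotonicity of the power function on $(0,\infty)$ and the reverse triangle inequality.
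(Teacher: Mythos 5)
Your proposal is correct and follows essentially the same route as the paper: the authors also assume $\|x\|\leq\|y\|$ and factor the difference as $H_{\geq 1}(p)-K_{\geq 1}(p)=\frac{\|y\|\left(\|y\|^{p-1}-\|x\|^{p-1}\right)\left(\|x-y\|+\|x\|-\|y\|\right)}{\|y\|-\|x\|}$, which is exactly your factorisation, and conclude by monotonicity of $t\mapsto t^{p-1}$ and the triangle inequality. No gaps.
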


\begin{proof}
Without loss of generality we assume that $\|x\|\leq \|y\|$. Then, considering the difference
$H_{\geq 1}(p)-K_{\geq 1}(p)$ and factoring the obtained expression, we have that
\begin{equation*}
\begin{split}
H_{\geq 1}(p)-K_{\geq 1}(p)&=\frac{\|y\|^{p}-\|x\|^{p}}{\|y\|-\|x\|}\|x-y\|-\big( \|y\|^{p}-\|x\|^{p-1}\|y\|+\|x\|^{p-1}\|x-y\|  \big)\\
&=\frac{\| y \|^{p}\|x-y\|-\| y \|^{p+1}+\| y \|^{p}\|x\|-\| x \|^{p}\|y\|}{\|y\|-\|x\|}\\
&\quad +\frac{\| x \|^{p-1}\| y \|^{2}-\| x \|^{p-1}\|y\|\|x-y\|}{\|y\|-\|x\|}\\
&=\frac{\|y\|\big(\|y\|^{p-1}-\|x\|^{p-1}  \big)\big(\|x-y\|+\|x\|-\|y\|  \big)}{\|y\|-\|x\|}.
\end{split}
\end{equation*}
Now, since $\|y\|^{p-1}-\|x\|^{p-1}\geq 0$ and $\|x-y\|+\|x\|-\|y\|\geq 0$, by the triangle inequality, it follows that
$H_{\geq 1}(p)-K_{\geq 1}(p)\geq 0$, which ensures (\ref{boji od hilea}).
\end{proof}

Theorem \ref{teoremcic} can also be exploited in deriving mutual bounds for the skew  $p$-angular distance. Recall that the skew $p$-angular distance between nonzero elements $x,y\in X$ (see \cite{rooin}) is defined by
\begin{equation*}
\beta_p[x,y]=\left\Vert \Vert y\Vert^{p-1}x- \Vert x\Vert^{p-1}y\right\Vert, \ p\in \mathbb{R}.
\end{equation*}
 When $p=0$, we set $\beta[x,y]$ for $\beta_p[x,y]$ and call it simply the  skew angular distance between $x,y\in X$. It is easy to see that the $p$-angular distance and the skew $p$-angular distance are related by
\begin{equation}\label{veza}
\beta_p [x,y]=\Vert x \Vert^{p-1}\Vert y \Vert^{p-1} \alpha_{2-p}[x,y].
\end{equation}

\begin{corollary}
Let  $X=\left(X, \left\Vert \cdot \right\Vert \right)$ be a  normed linear space and let $p\leq 1$. Then the inequalities
\begin{equation}
\begin{split}\label{beta-p}
&\ \|x\|\|y\|^{p-1}+\|x\|^{p-1}\|y\|-\frac{\|x\|+\|y\|-\|x-y\|  }{\min\! {}^{1-p}\{\|x\|,\|y\|\}}\\
\leq &\  \beta_p[x,y]\\
\leq & \ \|x\|\|y\|^{p-1}+\|x\|^{p-1}\|y\|-\frac{\|x\|+\|y\|-\|x-y\|  }{\max\! {}^{1-p}\{\|x\|,\|y\|\}}
\end{split}
\end{equation}
hold for all nonzero elements $x,y\in X$. If $p>1$, then the inequalities in {\rm (\ref{beta-p})} are reversed.
\end{corollary}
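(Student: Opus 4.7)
The plan is to reduce the bounds for $\beta_p[x,y]$ directly to those for $\alpha_{2-p}[x,y]$ via the identification (\ref{veza}), and then invoke Corollary \ref{korolarcicjos}. Concretely, for $p\leq 1$ we have $2-p\geq 1$, so applying (\ref{alfa-p}) with parameter $2-p$ in place of $p$ yields
\begin{equation*}
\begin{split}
&\|x\|^{2-p}+\|y\|^{2-p}-\bigl(\|x\|+\|y\|-\|x-y\|\bigr)\max{}^{1-p}\{\|x\|,\|y\|\}\\
\leq\,&\alpha_{2-p}[x,y]\\
\leq\,&\|x\|^{2-p}+\|y\|^{2-p}-\bigl(\|x\|+\|y\|-\|x-y\|\bigr)\min{}^{1-p}\{\|x\|,\|y\|\}.
\end{split}
\end{equation*}
Multiplying through by the positive factor $\|x\|^{p-1}\|y\|^{p-1}$ and using $\beta_p[x,y]=\|x\|^{p-1}\|y\|^{p-1}\alpha_{2-p}[x,y]$ from (\ref{veza}) then delivers the desired bounds for $\beta_p[x,y]$.

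The only step requiring a brief observation is the algebraic simplification of the right-most and left-most summands. For any nonzero $x,y\in X$ one has
$$\|x\|^{p-1}\|y\|^{p-1}\,\max{}^{1-p}\{\|x\|,\|y\|\}=\frac{1}{\min{}^{1-p}\{\|x\|,\|y\|\}},$$
which follows by splitting into the two cases $\|x\|\leq\|y\|$ and $\|x\|\geq\|y\|$; in each case the exponent cancellation is immediate. The symmetric identity
$$\|x\|^{p-1}\|y\|^{p-1}\,\min{}^{1-p}\{\|x\|,\|y\|\}=\frac{1}{\max{}^{1-p}\{\|x\|,\|y\|\}}$$
holds by the same argument. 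Since $\|x\|^{p-1}\|y\|^{p-1}\cdot\|x\|^{2-p}=\|x\|\|y\|^{p-1}$ and $\|x\|^{p-1}\|y\|^{p-1}\cdot\|y\|^{2-p}=\|x\|^{p-1}\|y\|$, the two outer terms combine to $\|x\|\|y\|^{p-1}+\|x\|^{p-1}\|y\|$, matching exactly the main terms in (\ref{beta-p}).

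For the case $p>1$, we have $2-p<1$, so by the last assertion of Corollary \ref{korolarcicjos} the inequalities for $\alpha_{2-p}[x,y]$ are reversed, and the same multiplication argument then reverses (\ref{beta-p}). No step presents a genuine obstacle; the only thing to watch is keeping the roles of $\min$ and $\max$ straight under the reciprocation, which is handled cleanly by the two identities above.
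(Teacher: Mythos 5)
Your proof is correct and is exactly the paper's argument: apply Corollary \ref{korolarcicjos} with $2-p$ in place of $p$ and multiply through by $\|x\|^{p-1}\|y\|^{p-1}$ using (\ref{veza}). The paper states this in one line; you have merely supplied the routine exponent bookkeeping, which checks out.
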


\begin{proof} We utilize inequalities in  (\ref{alfa-p}) with   $2-p$ instead of $p$, and the above formula
(\ref{veza}).
\end{proof}

\begin{remark}
Taking into account Remark \ref{positivity} and relation (\ref{veza}), it follows that the lower bounds in (\ref{beta-p}) are nonnegative for all $p\in [0,2]$. In particular,
if $p=0$, then the inequalities in (\ref{beta-p}) reduce to the following mutual bounds for   $\beta[x,y]$, obtained by
Dehghan \cite{degen}:
$$
\frac{\|x-y\|}{\min\{\|x\|, \|y\|\}}-\frac{\big|\|x\|-\|y\|  \big|}{\max\{\|x\|, \|y\|\}}\leq \beta[x,y]\leq \frac{\|x-y\|}{\max\{\|x\|, \|y\|\}}+\frac{\big|\|x\|-\|y\|  \big|}{\min\{\|x\|, \|y\|\}}.
$$
\end{remark}

In order to conclude this section, we give yet another pair of mutual bounds for the $p$-angular distance which relies  on the estimates in (\ref{obostranoao}). Namely, due to homogeneity of
angular distance $\alpha[x,y]$, we obtain mutual bounds for $\alpha_p[x,y]$, expressed in terms of $\alpha[x,y]$.
\begin{corollary}
Let $X=\left(X, \left\Vert \cdot \right\Vert \right)$ be a  normed linear space. If $p\geq 0$, then the inequalities
\begin{equation}\label{bezveze1}
\begin{split}
&\max\! {}^{p}\{\|x\|,\|y\|\}\alpha[x,y]-\left| \|x\|^{p}-\|y\|^{p} \right|\\
\leq &\  \alpha_p[x,y]\leq  \min\! {}^{p}\{\|x\|,\|y\|\}\alpha[x,y]+\left| \|x\|^{p}-\|y\|^{p} \right|
\end{split}
\end{equation}
hold for all nonzero vectors $x,y\in X$. If $p<0$, then
\begin{equation}\label{bezveze2}
\begin{split}
&\min\! {}^{p}\{\|x\|,\|y\|\}\alpha[x,y]-\left| \|x\|^{p}-\|y\|^{p} \right|\\
\leq &\  \alpha_p[x,y]\leq  \max\! {}^{p}\{\|x\|,\|y\|\}\alpha[x,y]+\left| \|x\|^{p}-\|y\|^{p} \right|.
\end{split}
\end{equation}
\end{corollary}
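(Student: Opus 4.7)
The plan is to follow exactly the hint in the text, namely to exploit the scale-invariance of the ordinary angular distance $\alpha[\cdot,\cdot]$ and feed the rescaled vectors into the Maligranda double inequality (\ref{obostranoao}). Set $x'=\|x\|^{p-1}x$ and $y'=\|y\|^{p-1}y$. Then $\|x'\|=\|x\|^p$, $\|y'\|=\|y\|^p$, and by definition $\|x'-y'\|=\alpha_p[x,y]$. Since $\|x\|^{p-1}>0$, the unit vectors are unchanged, so $\alpha[x',y']=\alpha[x,y]$.

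Next I would apply (\ref{obostranoao}) to $x',y'$ to obtain
\begin{equation*}
\frac{\alpha_p[x,y]-\bigl|\|x\|^p-\|y\|^p\bigr|}{\min\{\|x\|^p,\|y\|^p\}}
\;\leq\;\alpha[x,y]\;\leq\;
\frac{\alpha_p[x,y]+\bigl|\|x\|^p-\|y\|^p\bigr|}{\max\{\|x\|^p,\|y\|^p\}}.
\end{equation*}
Multiplying across by the positive denominators and isolating $\alpha_p[x,y]$ immediately yields the desired two-sided estimate, modulo identifying $\min\{\|x\|^p,\|y\|^p\}$ and $\max\{\|x\|^p,\|y\|^p\}$ in terms of $\min^{p}\{\|x\|,\|y\|\}$ and $\max^{p}\{\|x\|,\|y\|\}$.

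The only care needed is the sign of $p$: the function $t\mapsto t^p$ is increasing on $(0,\infty)$ when $p\geq 0$, giving $\min\{\|x\|^p,\|y\|^p\}=\min^p\{\|x\|,\|y\|\}$ and $\max\{\|x\|^p,\|y\|^p\}=\max^p\{\|x\|,\|y\|\}$, which leads directly to (\ref{bezveze1}). For $p<0$ the function is decreasing, so the roles of $\min$ and $\max$ swap, i.e.\ $\min\{\|x\|^p,\|y\|^p\}=\max^p\{\|x\|,\|y\|\}$ and $\max\{\|x\|^p,\|y\|^p\}=\min^p\{\|x\|,\|y\|\}$, which after rearrangement produces (\ref{bezveze2}). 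There is no genuine obstacle here; the case split driven by $\mathrm{sgn}(p)$ is the only subtlety and it simply interchanges the two external factors while preserving the structure of the bound, so the proof amounts to the substitution, the identification $\alpha[x',y']=\alpha[x,y]$, and a one-line rearrangement in each of the two cases.
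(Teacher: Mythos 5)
Your proposal is correct and coincides with the paper's own proof: both substitute $x'=\|x\|^{p-1}x$, $y'=\|y\|^{p-1}y$ into the Maligranda bounds (\ref{obostranoao}), use the homogeneity $\alpha[x',y']=\alpha[x,y]$, and then identify $\min\{\|x\|^p,\|y\|^p\}$ and $\max\{\|x\|^p,\|y\|^p\}$ according to the sign of $p$. No gaps.
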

\begin{proof}
Rewriting (\ref{obostranoao}) with $x'=\|x\|^{p-1}x$, $y'=\|y\|^{p-1}y$ instead of $x$, $y$, respectively,  as well as noting that
 $\alpha[ax,by]=\alpha[x,y]$ when $ab>0$, we obtain
\begin{equation*}
\dfrac{\alpha_p[x,y]-|\Vert x\Vert^{p} - \Vert y\Vert^{p}|}{\min\{\Vert x\Vert^{p},\Vert y\Vert^{p}\}}\leq \alpha[x,y]\leq \dfrac{\alpha_p[x,y]+|\Vert x\Vert^{p} - \Vert y\Vert^{p}|}{\max\{\Vert x\Vert^{p},\Vert y\Vert^{p}\}}.
\end{equation*}
Consequently, the result follows from relations $\min\{\Vert x\Vert^{p},\Vert y\Vert^{p}\}=\min\! {}^{p}\{\|x\|,\|y\|\}$, for $p\geq 0$,
$\min\{\Vert x\Vert^{p},\Vert y\Vert^{p}\}=\max\! {}^{p}\{\|x\|,\|y\|\}$, for $p<0$, and the similar  formulas for maximum.
\end{proof}

\begin{remark}
The lower bound in (\ref{bezveze1}) is meaningful if the condition
\begin{equation}\label{da ne bude bezveze}
\alpha[x,y]\geq \frac{\left| \|x\|^{p}-\|y\|^{p} \right|}{\max\! {}^{p}\{\|x\|,\|y\|\}}
\end{equation}
is valid. Obviously, (\ref{da ne bude bezveze}) holds if $\alpha[x,y]\geq 1$. On the other hand,  it is easy to show that
the lower bound in (\ref{bezveze1}) can take negative values when $\alpha[x,y]<1$. For example, if $x,y\in X$ are such that $\alpha[x,y]<1$, $\|x\|=n$ and $\|y\|=n^2$, $n\in \mathbb{N}$,  then the lower bound in
(\ref{bezveze1}) is equal to $n^{2p}\big(\alpha[x,y]-1+n^{-p}\big)$, which is evidently negative for $p>0$ and sufficiently large $n$. The same conclusion can be drawn for inequality (\ref{bezveze2}),
we omit details here.
\end{remark}

\section{Bounds in inner product spaces and characterizations of inner product spaces}\label{sec3}

 Rooin et al. \cite{rocky}, proved  that if $X$ is an inner product space and  $\big| \frac{p}{q} \big|\geq 1$, then the inequality
\begin{equation}\label{apasolutno}
\alpha_p[x,y]\leq \left|\frac{\|y\|^{p}-\|x\|^{p}}{\|y\|^{q}-\|x\|^{q}}\right|\alpha_q[x,y]
\end{equation}
holds for all $x,y\in X$ such that $\|x\|\neq\|y\|$.
 On the other hand, generalizing the integral bounds for the $p$-angular distance established by Dragomir \cite{ukraine},  Rooin et al. \cite{rooin}, proved that if $\frac{p}{q}\geq 1$, then  inequality
 (\ref{apasolutno}) is valid in any normed linear space. By putting $q=1$ in (\ref{apasolutno}), we obtain the Hile inequality (\ref{obicni hile}), provided that $p>1$.

\begin{remark}
If $\frac{p}{q}\geq 1$, then inequality (\ref{apasolutno}) can be  derived directly from the Hile inequality (\ref{obicni hile}).
 Namely, assuming that (\ref{obicni hile}) holds,
it follows that holds the inequality
$$
\alpha_\frac{p}{q}[x',y']\leq \frac{\|y'\|^{\frac{p}{q}}-\|x'\|^{\frac{p}{q}}}{\|y'\|-\|x'\|}\|x'-y'\|,
$$
where  $x'=\|x\|^{q-1}x$ and $y'=\|y\|^{q-1}y$. Clearly, the last inequality reduces to (\ref{apasolutno}) since
$\alpha_{p/q}[x',y']=\alpha_p[x,y]$ and $\|x'-y'\|=\alpha_q[x,y]$.
\end{remark}

The extended Hile inequality (\ref{apasolutno}) in an inner product space
 has been established as a consequence of a generalized Dunkl-Wiliams identity between the $p$-angular and the $q$-angular distances (for more details, see \cite{rocky}).
However, this can be achieved through a simpler identity that establishes a connection between $\alpha_p[x,y]$, $p\neq 0$, and $\alpha[x,y]$.
Namely, if $p=0$, the explicit formula for the $p$-angular distance, given by (\ref{zap}), reduces to
\begin{equation}
\alpha^2[x,y] = \dfrac{\left\Vert x-y\right\Vert^2-\left(\left\Vert x\right\Vert-\left\Vert y\right\Vert\right)^2}{\left\Vert x\right\Vert\left\Vert y\right\Vert}.  \label{zadva}
\end{equation}
Then, considering (\ref{zadva}) with $x'=\|x\|^{p-1}x$ and $y'=\|y\|^{p-1}y$ instead of $x$ and $y$, respectively, as well as taking into account homogeneity of $\alpha[x,y]$, we
arrive at  the following identity that was used in \cite{krnic}:
\begin{equation}\label{identitet0}
\alpha_p^{2}[x,y]=\left( \|x\|^{p}-\|y\|^{p} \right)^{2}+\|x\|^{p}\|y\|^{p}\alpha^{2}[x,y].
\end{equation}
 Now, the
 quotient
$$
\frac{\alpha_p^{2}[x,y]}{\alpha_q^{2}[x,y]}=\frac{\left( \|x\|^{p}-\|y\|^{p} \right)^{2}+\|x\|^{p}\|y\|^{p}\alpha^{2}[x,y]}{\left( \|x\|^{q}-\|y\|^{q} \right)^{2}+\|x\|^{q}\|y\|^{q}\alpha^{2}[x,y]}
$$
can be rewritten as
\begin{equation*}
\begin{split}
&\  \alpha_q^{2}[x,y]\left( \|x\|^{p}-\|y\|^{p} \right)^{2}- \alpha_p^{2}[x,y]\left( \|x\|^{q}-\|y\|^{q} \right)^{2}\\
=&\ \alpha^{2}[x,y]\left(\|x\|^{q}\|y\|^{q}\alpha_p^{2}[x,y]-\|x\|^{p}\|y\|^{p}\alpha_q^{2}[x,y]   \right),
\end{split}
\end{equation*}
which implies that inequality (\ref{apasolutno}) is valid if and only if holds the inequality
\begin{equation}\label{korijenje}
\alpha_p[x,y]\geq \|x\|^\frac{p-q}{2}\|y\|^{\frac{p-q}{2}}\alpha_q[x,y].
\end{equation}
Finally, by virtue of statement (III) in  Theorem A, inequality (\ref{korijenje}) holds for $|p|\geq |q|$, which means that (\ref{apasolutno}) holds for $\big| \frac{p}{q} \big|\geq 1$
in an inner product space.

Utilizing identity (\ref{identitet0}) once again, we will establish yet another inequality through equivalence with   (\ref{korijenje}).
We have already discussed that inequality (\ref{apasolutno}) holds in an arbitrary  normed linear space when $\frac{p}{q}\geq 1$. On the other hand, our next result shows significantly different behavior from (\ref{apasolutno}), since we will
 obtain a  characterization of an inner product space.  In order to establish the corresponding result, we will
keep in mind  the next two theorems due to Lorch and Ficken.

\begin{thmB}[Lorch \cite{lorch}]
Let  $X=\left(X, \left\Vert \cdot \right\Vert \right)$ be a  normed linear space. Then the following statements are mutually equivalent:
\begin{enumerate}
\item[(i)] For each $x,y\in X$ if $\|x\|=\|y\|$, then $\|x+y\|\leq \|\mu x+\mu^{-1} y\|$, for all $\mu \neq 0$.

\item[(ii)] For each $x,y\in X$ if $\|x+y\|\leq \|\mu x+\mu^{-1} y\|$, for all $\mu \neq 0$, then $\|x\|=\|y\|$.

\item[(iii)] $X=\left(X, \left\Vert \cdot \right\Vert \right)$ is an inner product space.
\end{enumerate}
\end{thmB}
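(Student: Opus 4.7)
My plan is to establish the three mutual equivalences through the cycle (iii) $\Rightarrow$ (i), (iii) $\Rightarrow$ (ii), and then the substantive direction (i) $\Rightarrow$ (iii); the implication (ii) $\Rightarrow$ (iii) follows by an entirely analogous contrapositive argument. The two implications out of (iii) are short polarization-identity computations, while the remaining directions are the content of Lorch's original 1948 paper and are the source of essentially all the difficulty.

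For (iii) $\Rightarrow$ (i), I would expand
\[
\|\mu x+\mu^{-1}y\|^{2}-\|x+y\|^{2}=(\mu^{2}-1)\|x\|^{2}+(\mu^{-2}-1)\|y\|^{2},
\]
and collapse the right-hand side using $\|x\|=\|y\|$ to $(\mu-\mu^{-1})^{2}\|x\|^{2}\geq 0$. For (iii) $\Rightarrow$ (ii), the same expansion yields
\[
\|x+y\|^{2}\leq \mu^{2}\|x\|^{2}+\mu^{-2}\|y\|^{2}+2\langle x,y\rangle
\]
for every $\mu\neq 0$; minimizing the right-hand side over $\mu$ via AM--GM gives $2\|x\|\|y\|+2\langle x,y\rangle$, so that $(\|x\|-\|y\|)^{2}\leq 0$ and hence $\|x\|=\|y\|$.

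The main obstacle is the converse (i) $\Rightarrow$ (iii), which must extract genuine Euclidean structure from a one-parameter family of norm inequalities. My plan is the classical reduction: show that (i) forces the parallelogram law on every two-dimensional subspace of $X$, and then invoke the Jordan--von Neumann theorem to conclude that the norm of $X$ arises from an inner product. The key geometric input is that (i) asserts $\mu=\pm 1$ is a minimum of the scalar function $\mu\mapsto \|\mu x+\mu^{-1}y\|$ whenever $\|x\|=\|y\|$; applying the hypothesis also to the pair $(x,-y)$ yields a companion inequality for $\|x-y\|$, and a careful substitution scheme---replacing $x,y$ by suitable equal-norm combinations of arbitrary vectors $u,v$---eventually produces $\|u+v\|^{2}+\|u-v\|^{2}=2(\|u\|^{2}+\|v\|^{2})$.

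Finally, (ii) $\Rightarrow$ (iii) I would handle by a contrapositive argument together with the same two-dimensional reduction: if the parallelogram law failed somewhere, one could construct a pair $(x,y)$ with $\|x\|\neq \|y\|$ satisfying the global minimization hypothesis of (ii), contradicting it. The delicate part of the whole plan is precisely the substitution scheme in the preceding paragraph---this is where Lorch's argument is most subtle and where I would need either to reproduce his original construction or to follow a later streamlined version such as the one in Day's monograph on normed linear spaces.
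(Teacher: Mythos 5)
The paper does not prove this statement at all: Theorem~B is quoted verbatim from Lorch's 1948 paper and is used as a black box in the proof of Theorem~4, so there is no internal argument to compare yours against. Judged on its own terms, your proposal is complete only for the two easy implications. The computations for (iii)~$\Rightarrow$~(i) and (iii)~$\Rightarrow$~(ii) are correct (modulo the degenerate case $x=0$ or $y=0$ in the second one, where the hypothesis with $|\mu|\to\infty$ or $|\mu|\to 0$ forces both vectors to vanish, so the conclusion still holds).

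The genuine gap is that the substantive directions (i)~$\Rightarrow$~(iii) and (ii)~$\Rightarrow$~(iii) are not actually proved. You correctly identify the standard strategy --- derive the parallelogram law on two-dimensional subspaces and invoke Jordan--von Neumann --- but the ``careful substitution scheme'' that converts the one-parameter minimality condition at $\mu=\pm 1$ into the identity $\|u+v\|^{2}+\|u-v\|^{2}=2\bigl(\|u\|^{2}+\|v\|^{2}\bigr)$ is exactly where all the difficulty of Lorch's theorem resides, and you explicitly defer it to Lorch's original construction or to Day's monograph rather than supplying it. Likewise, the claim that (ii)~$\Rightarrow$~(iii) ``follows by an entirely analogous contrapositive argument'' is asserted rather than demonstrated; in Lorch's paper the two conditions require separate (though related) arguments. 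As it stands, the proposal establishes that (iii) implies each of (i) and (ii), but the heart of the theorem --- that either condition alone forces the norm to come from an inner product --- remains an outline with its central step missing.
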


\begin{thmC}[Ficken \cite{ficken}]\label{teorem ficken}
Let  $X=\left(X, \left\Vert \cdot \right\Vert \right)$ be a  normed linear space. Then the following statements are mutually equivalent:
\begin{enumerate}
\item[(i)] For each $x,y\in X$ if $\|x\|=\|y\|$, then $\|\lambda x+\mu y\|= \|\mu x+\lambda y\|$, for all $\lambda,\mu >0$.

\item[(ii)] For each $x,y\in X$ if $\|x\|=\|y\|$, then $\|\mu x+\mu^{-1} y\|= \|\mu^{-1} x+\mu y\|$, for all $\mu>0$.

\item[(iii)] $X=\left(X, \left\Vert \cdot \right\Vert \right)$ is an inner product space.
\end{enumerate}
\end{thmC}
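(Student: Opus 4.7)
The plan is to establish the cycle $(\mathrm{iii})\Rightarrow(\mathrm{i})\Rightarrow(\mathrm{ii})\Rightarrow(\mathrm{i})$ together with the closing arrow $(\mathrm{i})\Rightarrow(\mathrm{iii})$, and to reduce that last step to Lorch's Theorem~B rather than reconstructing an inner product from scratch. This way, the only substantive work will be to recover a Lorch-type inequality $\|x+y\|\leq\|\mu x+\mu^{-1}y\|$ from the Ficken symmetry, and that inequality is naturally suggested by the very shape of condition~(ii).

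For $(\mathrm{iii})\Rightarrow(\mathrm{i})$ I would simply expand
\begin{equation*}
\|\lambda x+\mu y\|^{2}=\lambda^{2}\|x\|^{2}+2\lambda\mu\langle x,y\rangle+\mu^{2}\|y\|^{2},
\end{equation*}
which is manifestly symmetric in $(\lambda,\mu)$ whenever $\|x\|=\|y\|$. The implication $(\mathrm{i})\Rightarrow(\mathrm{ii})$ is a trivial specialization. For $(\mathrm{ii})\Rightarrow(\mathrm{i})$, given $\lambda,\mu>0$ I would set $r=\sqrt{\lambda\mu}$ and $s=\sqrt{\lambda/\mu}$, so that $\lambda=rs$ and $\mu=rs^{-1}$; then $\lambda x+\mu y=r(sx+s^{-1}y)$, and applying~(ii) to the bracketed vector converts $\|\lambda x+\mu y\|$ into $r\|s^{-1}x+sy\|=\|\mu x+\lambda y\|$.

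The crux is $(\mathrm{i})\Rightarrow(\mathrm{iii})$. Assuming~(i) and $\|x\|=\|y\|$, my goal is to verify condition~(i) of Lorch's Theorem~B, namely $\|x+y\|\leq\|\mu x+\mu^{-1}y\|$ for every $\mu\neq 0$. I would start from the vector identity
\begin{equation*}
(\mu+\mu^{-1})(x+y)=(\mu x+\mu^{-1}y)+(\mu^{-1}x+\mu y)
\end{equation*}
and apply the triangle inequality. Hypothesis~(i) forces $\|\mu x+\mu^{-1}y\|=\|\mu^{-1}x+\mu y\|$ (for $\mu>0$ directly, and for $\mu<0$ after pulling out a sign and applying~(i) to the positive pair $|\mu|,|\mu|^{-1}$), so the right-hand side collapses to $2\|\mu x+\mu^{-1}y\|$. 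Dividing through by $|\mu+\mu^{-1}|$ and invoking the elementary bound $|\mu+\mu^{-1}|\geq 2$ then yields the Lorch inequality, and Theorem~B closes the argument.

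The main obstacle I anticipate is precisely this last step: recognizing that a symmetry hypothesis on the two-scalar norm is, by itself, strong enough to produce a quantitative Lorch-type estimate. The convex-combination identity above, paired with the AM--GM bound $|\mu+\mu^{-1}|\geq 2$, is what does the geometric work; everything else in the proof reduces either to an inner-product expansion or to an algebraic rescaling of the scalars.
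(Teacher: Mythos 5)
The paper does not prove Theorem C at all: it is quoted verbatim as a classical result of Ficken \cite{ficken} and used as a black box, so there is no internal proof to compare against. Your argument is correct as written. The easy arrows check out: the inner-product expansion makes $(\mathrm{iii})\Rightarrow(\mathrm{i})$ immediate since the cross term $2\lambda\mu\langle x,y\rangle$ is symmetric and $\lambda^{2}\|x\|^{2}+\mu^{2}\|y\|^{2}$ is symmetric exactly when $\|x\|=\|y\|$; the rescaling $\lambda=rs$, $\mu=rs^{-1}$ with $r=\sqrt{\lambda\mu}$, $s=\sqrt{\lambda/\mu}$ cleanly recovers $(\mathrm{i})$ from $(\mathrm{ii})$. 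The crux step is also sound: from $(\mu+\mu^{-1})(x+y)=(\mu x+\mu^{-1}y)+(\mu^{-1}x+\mu y)$, the triangle inequality, the Ficken symmetry (applied to $|\mu|,|\mu|^{-1}$ when $\mu<0$, after pulling out the sign), and $|\mu+\mu^{-1}|\geq 2$, you get $\|x+y\|\leq\|\mu x+\mu^{-1}y\|$, which is precisely condition (i) of Theorem B, and Lorch's theorem finishes. This is a genuinely different route from Ficken's original 1944 argument, which constructs the inner product directly; your version trades that construction for a dependence on Lorch's theorem, which is legitimate here since the paper states Theorem B as an independent known result, but it does mean your proof is only as self-contained as Theorem B is. The one cosmetic point worth tightening is the redundancy in your cycle: once you have $(\mathrm{iii})\Rightarrow(\mathrm{i})\Rightarrow(\mathrm{ii})\Rightarrow(\mathrm{i})\Rightarrow(\mathrm{iii})$, the separate listing of $(\mathrm{i})\Rightarrow(\mathrm{ii})$ inside the cycle and again as a standalone specialization is harmless but could be stated once.
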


Now, we are in position to state and prove  the main result of this section.

\begin{theorem}\label{glavni teorem}
Let $|p|\geq |q|$, $p\neq q$. Then, a normed linear space  $X=\left(X, \left\Vert \cdot \right\Vert \right)$ is an inner product space
if and only if the inequality
\begin{equation}
\label{hile3}{\alpha_p[x,y]}\geq \frac{{\|x\|^{p}+\|y\|^{p} }}{\|x\|^{q}+\|y\|^{q}  }\alpha_q[x,y]
\end{equation}
holds for all nonzero vectors $x,y\in X$.
\end{theorem}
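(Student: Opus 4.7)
The plan is to prove the two implications separately, using the inner-product identity (\ref{identitet0}), namely $\alpha_p^2[x,y]=(\|x\|^p-\|y\|^p)^2+\|x\|^p\|y\|^p\alpha^2[x,y]$, for the forward direction, and Lorch's Theorem B together with Ficken's Theorem C for the converse.

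For the forward direction, I would square (\ref{hile3}) and clear denominators to reduce it to
\[
(\|x\|^q+\|y\|^q)^2\alpha_p^2[x,y]-(\|x\|^p+\|y\|^p)^2\alpha_q^2[x,y]\geq 0.
\]
Setting $a=\|x\|$, $b=\|y\|$, $\alpha=\alpha[x,y]$, and substituting (\ref{identitet0}) for both $\alpha_p^2$ and $\alpha_q^2$, a direct manipulation factors the left-hand side as
\[
(A^2-B^2)\cdot\frac{4-\alpha^2}{4},\qquad A=(a^q+b^q)(a^p-b^p),\ B=(a^p+b^p)(a^q-b^q).
\]
The second factor is nonnegative because $\alpha[x,y]\leq 2$ by the triangle inequality applied to the unit vectors $x/\|x\|$ and $y/\|y\|$. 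For the first, the identities $A-B=2 a^q b^q(a^{p-q}-b^{p-q})$ and $A+B=2(a^{p+q}-b^{p+q})$ yield
\[
A^2-B^2=4 a^q b^q(a^{p-q}-b^{p-q})(a^{p+q}-b^{p+q}),
\]
whose sign is that of $(p-q)(p+q)=p^2-q^2$, nonnegative precisely when $|p|\geq|q|$.

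For the converse, the key move is to substitute $x\mapsto\mu x$ and $y\mapsto -\mu^{-1}y$ into (\ref{hile3}), where initially $\|x\|=\|y\|=1$ and $\mu>0$. The sign flip $y\mapsto -y$ converts the differences defining the $p$- and $q$-angular distances into sums, and the hypothesis becomes
\[
(\mu^q+\mu^{-q})\|\mu^p x+\mu^{-p}y\|\geq(\mu^p+\mu^{-p})\|\mu^q x+\mu^{-q}y\|
\]
for every $\mu>0$ and every pair $x,y$ of unit vectors. Setting $t=\ln\mu$, $G(t)=\|e^t x+e^{-t}y\|$, and $H(t)=G(t)/\cosh t$, this reads $H(pt)\geq H(qt)$ for every $t\in\mathbb{R}$, equivalently $H(r)\geq H(kr)$ with $k=q/p$ (note that $p\neq 0$, for otherwise $|p|\geq|q|$ and $p\neq q$ would be contradictory). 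The hypothesis $|p|\geq|q|$ forces $|k|\leq 1$.

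If $|k|<1$, iterating yields $H(r)\geq H(k^n r)$ for every $n$, and continuity of $H$ together with $k^n r\to 0$ forces $H(r)\geq H(0)=\|x+y\|$. Rewriting, $\|\mu x+\mu^{-1}y\|\geq\frac{\mu+\mu^{-1}}{2}\|x+y\|\geq\|x+y\|$; homogeneity extends this to arbitrary $\|x\|=\|y\|$, so Lorch's condition (i) holds and Theorem B identifies $X$ as an inner product space. If $|k|=1$ we must have $k=-1$, i.e.\ $p=-q$; then $H(r)\geq H(-r)$ together with the same inequality applied at $-r$ gives $G(r)=G(-r)$, which is precisely Ficken's condition (ii), and Theorem C concludes. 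I expect the most delicate step to be the sign-flipping substitution $y\mapsto -y$, since without it the $p$-angular distances remain differences and do not connect to the sum-based characterizations of Lorch and Ficken.
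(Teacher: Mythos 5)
Your proof is correct. The converse direction is essentially the paper's own argument in cleaner notation: the substitution $x\mapsto\mu x$, $y\mapsto-\mu^{-1}y$, the iteration along the geometric sequence $k^nr$ (the paper iterates $\mu^{r^n}$ with $r=q/p$), and the appeal to Lorch's and Ficken's theorems all appear there, organized into the three cases $0<|q/p|<1$, $q/p=-1$ and $q=0$, which your single function $H(t)=\|e^tx+e^{-t}y\|/\cosh t$ unifies; the only details you gloss over are Lorch's condition for negative $\mu$ (immediate, since $\|\mu x+\mu^{-1}y\|=\|(-\mu)x+(-\mu)^{-1}y\|$) and the trivial case $\|x\|=\|y\|=0$. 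The forward direction, however, is genuinely different and arguably tidier. The paper rewrites (\ref{identitet0}) as $\alpha_p^2[x,y]=(\|x\|^p+\|y\|^p)^2+\|x\|^p\|y\|^p(\alpha^2[x,y]-4)$, deduces that for $\alpha[x,y]<2$ inequality (\ref{hile3}) is equivalent to $\alpha_p[x,y]\geq\|x\|^{\frac{p-q}{2}}\|y\|^{\frac{p-q}{2}}\alpha_q[x,y]$, invokes statement (III) of Theorem A for the latter, and treats the boundary case $\alpha[x,y]=2$ separately. Your explicit factorization, with $a=\|x\|$, $b=\|y\|$, $\alpha=\alpha[x,y]$,
\[
\left(a^q+b^q\right)^2\alpha_p^2[x,y]-\left(a^p+b^p\right)^2\alpha_q^2[x,y]=a^qb^q\left(a^{p-q}-b^{p-q}\right)\left(a^{p+q}-b^{p+q}\right)\left(4-\alpha^2\right),
\]
which I have verified, proves the same implication in one stroke: it is self-contained (no appeal to Theorem A, whose equivalences the paper states only for $\mathrm{dim}\ \! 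X\geq 3$), it makes the role of the hypothesis $|p|\geq|q|$ transparent through the sign of $(p-q)(p+q)$, and it absorbs the case $\alpha[x,y]=2$ automatically rather than requiring the separate argument with $x+\lambda y=0$. Both routes ultimately rest on the same identity (\ref{identitet0}).
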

\begin{proof}
Let $X$ be an inner product space.
We will show equivalence of relations (\ref{korijenje}) and (\ref{hile3}), by rewriting (\ref{identitet0}) in a more suitable form:
$$
\alpha_p^{2}[x,y]=\left( \|x\|^{p}+\|y\|^{p} \right)^{2}+\|x\|^{p}\|y\|^{p}(\alpha^{2}[x,y]-4).
$$
Therefore we have
$$
\frac{\alpha_p^{2}[x,y]}{\alpha_q^{2}[x,y]}=\frac{\left( \|x\|^{p}+\|y\|^{p} \right)^{2}+\|x\|^{p}\|y\|^{p}(\alpha^{2}[x,y]-4)}{\left( \|x\|^{q}+\|y\|^{q} \right)^{2}+\|x\|^{q}\|y\|^{q}(\alpha^{2}[x,y]-4)},
$$
which can be rewritten as
\begin{equation*}
\begin{split}
&\ \alpha_q^{2}[x,y]\left( \|x\|^{p}+\|y\|^{p} \right)^{2}-\alpha_p^{2}[x,y]\left( \|x\|^{q}+\|y\|^{q} \right)^{2}\\
=&\ \left(\alpha^{2}[x,y]-4\right)\left( \|x\|^{q}\|y\|^{q}\alpha_p^{2}[x,y]-\|x\|^{p}\|y\|^{p}\alpha_q^{2}[x,y]  \right).
\end{split}
\end{equation*}
Now, since $\alpha[x,y]\leq 2$, the previous identity implies that inequalities (\ref{korijenje}) and (\ref{hile3}) are equivalent for $\alpha[x,y]<2$.
It remains to prove (\ref{hile3}) when $\alpha[x,y]=2$, which is trivial. Namely, if $\alpha[x,y]=2$, then there exist $\lambda>0$ such that $x+\lambda y=0$.
Then, $\alpha_p[x,y]=(1+\lambda^{p})\|x\|^{p}=\|x\|^{p}+\|y\|^{p}$, and we have equality sign in (\ref{hile3}). In conclusion, inequality (\ref{hile3}) holds whenever $|p|\geq |q|$.

Now, let $X$ be a normed linear space satisfying inequality (\ref{hile3}). We prove that $X$ is an inner product space by considering three cases $|p|>|q|>0$,   $p=-q$, and $q=0$ separately.

{\bf Case 1.} Let $|p|>|q|>0$. From validity of inequality  (\ref{hile3}), it follows that holds the inequality
\begin{equation*}
{\alpha_p[x',y']}\geq \frac{{\|x'\|^{p}+\|y'\|^{p} }}{\|x'\|^{q}+\|y'\|^{q}  }\alpha_q[x',y'],
\end{equation*}
where $x'=\|x\|^{\frac{1}{p}-1}x$ and $y'=\|y\|^{\frac{1}{p}-1}y$. Consequently, since $\alpha_p[x',y']=\|x-y\|$ and
$\alpha_q[x',y']=\alpha_{q/p}[x,y]$, it follows validity of  the inequality
\begin{equation}\label{pomocna}
\alpha_r[x,y]\leq \frac{\|x\|^{r}+\|y\|^{r}}{\|x\|+\|y\|}\|x-y\|,
\end{equation}
where $0<|r|=\big| \frac{q}{p} \big|<1$ and $x,y\neq 0$. Now, let $x,y\in X$ be such that $\|x\|=\|y\|$ and let $\mu\neq 0$. According to Theorem B, it suffices to prove that
$\|x+y\|\leq \|\mu x+\mu^{-1}y\|$. If $\|x\|=\|y\|=0$, this relation holds trivially. Otherwise, applying inequality (\ref{pomocna}) to $x''=\mu^{r^{n}}x$ and $y''=-\mu^{-r^{n}}y$, where $\mu>0$, instead of $x$ and $y$ respectively,
we obtain
$$
\alpha_r[x'',y'']\leq \frac{\|x''\|^{r}+\|y''\|^{r}}{\|x''\|+\|y''\|}\|x''-y''\|.
$$
Moreover, since $\|x\|=\|y\|$, we have  that
\begin{equation*}
\begin{split}
\alpha_r[x'',y'']&=\left\| \mu^{r^{n}(r-1)}\mu^{r^{n}}\|x\|^{r-1}x +\mu^{-r^{n}(r-1)}\mu^{-r^{n}}\|y\|^{r-1}y  \right\|\\
&=\|x\|^{r-1}\left\| \mu^{r^{n+1}}x+\mu^{-r^{n+1}}y \right\|,
\end{split}
\end{equation*}
so the above inequality reduces to
\begin{equation}\label{pomoc2}
\left\| \mu^{r^{n+1}}x+\mu^{-r^{n+1}}y \right\|\leq \frac{\mu^{r^{n+1}}+\mu^{-r^{n+1}}}{\mu^{r^{n}}+\mu^{-r^{n}}}\left\| \mu^{r^{n}}x+\mu^{-r^{n}}y \right\|.
\end{equation}
Now, consider the function $f(x)=a^x+a^{-x}$, where $a>0$, $a\neq 1$. Clearly, $f$ is an  even function. In addition, since $f'(x)=\log a (a^x-a^{-x})$, it follows that $f$ is increasing for $x>0$,
while it is decreasing for $x<0$. Consequently, we have that $a^{r}+a^{-r}<a+a^{-1}$, for $|r|<1$. Hence, by putting $a=\mu^{r^{n}}$ in the last inequality, we obtain
${\mu^{r^{n+1}}+\mu^{-r^{n+1}}}\leq {\mu^{r^{n}}+\mu^{-r^{n}}}$, so  (\ref{pomoc2}) implies that
$$
\left\| \mu^{r^{n+1}}x+\mu^{-r^{n+1}}y \right\|\leq \left\| \mu^{r^{n}}x+\mu^{-r^{n}}y \right\|.
$$
In other words,   $a_n=\big\| \mu^{r^{n}}x+\mu^{-r^{n}}y \big\|$, $n\in \mathbb{N}\cup\{0\}$, is a  decreasing sequence  which implies that
$$
\|x+y\|=\lim_{n\rightarrow\infty} a_n \leq a_0=\|\mu x+\mu^{-1}y\|.
$$
In addition, if $\mu$ is negative, then by putting $-\mu$ in the last inequality, we obtain
$$
\|x+y\|\leq \|(-\mu) x+(-\mu)^{-1}y\|=\|\mu x+\mu^{-1}y\|.
$$
In conclusion, $X$ is an inner product space by Theorem B.

{\bf Case 2.} Let $p=-q$. Then, taking into account (\ref{pomocna}), it follows that  the inequality
$$
\left\|\|x\|^{-2}x-\|y\|^{-2}y  \right\|=\alpha_{-1}[x,y]\leq \frac{\|x\|^{-1}+\|y\|^{-1}}{\|x\|+\|y\|}\|x-y\|
$$
holds for all nonzero vectors $x,y\in X$. Clearly, the above relation is equivalent to
\begin{equation}\label{pomoc3}
\left\|\frac{\|y\|}{\|x\|}x-\frac{\|x\|}{\|y\|}y   \right\|\leq \|x-y\|.
\end{equation}
Now, assume that $x,y\in X$ are such that $\|x\|=\|y\|$ and let $\mu> 0$. Applying inequality (\ref{pomoc3}) to $\mu x$ and $-\mu^{-1}y$ instead of $x$ and $y$ respectively,
we obtain the inequality $\big\| \mu^{-1}x+\mu y \big\|\leq \big\| \mu x+\mu^{-1}y\big\|$. In the same way, inequality (\ref{pomoc3}) with  $\mu^{-1} x$ and $-\mu y$ instead of $x$ and $y$ respectively, yields the inequality
with the reversed sign, which implies that $\big\| \mu^{-1}x+\mu y \big\|= \big\| \mu x+\mu^{-1}y\big\|$. Therefore, Theorem C ensures that $X$ is an inner product space.

{\bf Case 3.} It remains to consider the case of $q=0$. In this setting, inequality (\ref{hile3}) reduces to
\begin{equation}\label{trebat ce}
\alpha[x,y]\leq \frac{2}{\|x\|^{p}+\|y\|^{p}}\alpha_p[x,y],
\end{equation}
which is covered by Theorem A,    statement (II), when dimension of $X$ is not less than 3. Alternatively, we can also exploit Theorem B which has no restriction on  dimension of a space.
Namely, let $x,y$ be nonzero vectors such that $\|x\|=\|y\|$
 and let $\mu>0$. Then, applying inequality (\ref{trebat ce}) to $\mu^{\frac{1}{p}} x$ and $-\mu^{-\frac{1}{p}}y$ instead of $x$ and $y$ respectively, as well as utilizing the arithmetic geometric
mean inequality, we obtain
$$
\|x+y\|\leq \frac{2}{\mu+\mu^{-1}}\|\mu x+\mu^{-1}y\|\leq \|\mu x+\mu^{-1}y\|.
$$
 Consequently, Theorem B implies that $X$ is an inner product space which completes the proof.
 \end{proof}

\begin{remark}
It should be noted  here that the proof of Theorem A relies on the symmetry of Birkoff-James orthogonality which is a characterization of inner pro\-duct spaces when dimension
of the corresponding space is at least three (for more details, see \cite{bj1,bj2,rocky}). If we would replace an arbitrary real parameter $r$ in statement (V) of Theorem A with a nonnegative parameter $r$,
then the statements (I)--(V) would be equivalent in any normed linear space, regardless of dimension. Namely, in that case, the corresponding equivalence could be established via Theorems B and C.
\end{remark}

\begin{remark}
If $q=1$, inequality (\ref{hile3}) reduces to
\begin{equation}
\label{hile4}
\alpha_p[x,y]\geq \frac{\|x\|^{p}+\|y\|^{p}}{\|x\|+\|y\|}\|x-y\|,
\end{equation}
where $|p|\geq 1$. Although inequality (\ref{hile4}) seems to be familiar to the Hile inequality (\ref{obicni hile}),  they show significantly different behavior.
Namely, the Hile inequality holds in every normed linear space, while (\ref{hile4}) characterizes an inner product space for $|p|>1$. On the other hand, if $q=0$,
inequality (\ref{hile3}) becomes
\begin{equation*}
\alpha_p[x,y]\geq \frac{\|x\|^{p}+\|y\|^{p}}{2}\alpha[x,y],
\end{equation*}
which coincides with the corresponding inequality in Theorem A.
\end{remark}

Theorem A provides characterizations of  inner product spaces expressed in terms of power means. Recall that a weighted power mean $M_r(x_1, x_2, \ldots , x_n)$ is defined by
$$
M_r(x_1, x_2, \ldots , x_n)=\bigg( \sum_{k=1}^n p_k x_k^{r} \bigg)^{\frac{1}{r}},
$$
where $\sum_{k=1}^n p_k=1$, $p_k>0$, and $(x_1, x_2, \ldots , x_n)$ is a positive $n$-tuple. The nonweighted means correspond to the setting $p_k=1/n$, $k=1,2,\ldots , n$. Recall that for $p=1, 0, -1$ we obtain respectively, the arithmetic, geometric and harmonic mean.
In addition, $M_{-\infty}(x_1, x_2, \ldots , x_n)=\min\{x_1, x_2, \ldots , x_n\}$ and $M_{\infty}(x_1, x_2, \ldots , x_n)=\max\{x_1, x_2, \ldots , x_n\}$.
The generalized mean inequality asserts that if $r<s$, then
\begin{equation}
\label{mean ineq}
M_r(x_1, x_2, \ldots , x_n)\leq M_s(x_1, x_2, \ldots , x_n).
\end{equation}
Inequality (\ref{mean ineq}) is true for real values of $r$ and $s$, as well as for positive and negative infinity values. For more details about the means inequalities,
the reader is referred to \cite{11}.

Taking into account the monotonicity property (\ref{mean ineq}),   Theorem A asserts that the  inequality
\begin{equation}\label{sredine}
\alpha_p[x,y]\geq \left(\frac{\|x\|^{pr}+\|y\|^{pr}}{2}\right)^{\frac{1}{r}}\alpha[x,y]
\end{equation}
characterizes an inner product space for every  $r\leq 1$. Recall, once again, that for $r=0$ the above inequality yields the estimate $\alpha_p[x,y]\geq\|x\|^{\frac{p}{2}}\|y\|^{\frac{p}{2}}\alpha[x,y]$,
due to the well-known limit value $\lim_{x\rightarrow 0}\big(\frac{a^{x}+b^{x}}{2}  \big)^{\frac{1}{x}}=\sqrt{ab}$.

\begin{remark}\label{deveti remark}
It was shown  in \cite{rocky} that inequality (\ref{sredine}) is the best possible in the sense that there is no exponent $r>1$ such that (\ref{sredine}) is valid for all nonzero vectors $x,y\in X$.
This can also be nicely seen by considering any two linearly dependent vectors $x,y$ such that $x+\lambda y=0$, $\lambda>0$. Then, $\alpha_p[x,y]=\|x\|^{p}+\|y\|^{p}$, $\alpha[x,y]=2$, so in this case (\ref{sredine})
reduces to
$$
\frac{\|x\|^{p}+\|y\|^{p}}{2}\geq \left(\frac{\|x\|^{pr}+\|y\|^{pr}}{2}\right)^{\frac{1}{r}},
$$
which is in contrast to (\ref{mean ineq}).
\end{remark}

According to Remark \ref{deveti remark},  inequality (\ref{sredine}) does not hold for $r>1$ in general. However, it is easy to describe pairs of vectors for which  inequality (\ref{sredine})
holds when $r=2$. The corresponding result relies on identity (\ref{identitet0}).

\begin{corollary}
Let $X=\left( X,\left\langle \cdot ,\cdot
\right\rangle \right) $ be an inner product space and let $x,y\in X$ be nonzero vectors. If $\alpha[x,y]\leq \sqrt{2}$, then holds the inequality
\begin{equation}
\label{kvadratik2}
\alpha_p[x,y]\geq \sqrt{\frac{\|x\|^{2p}+\|y\|^{2p}}{2}}\alpha[x,y],
\end{equation}
while for $\alpha[x,y]>\sqrt{2}$, the sign of inequality {\rm(\ref{kvadratik2})} is reversed.
\end{corollary}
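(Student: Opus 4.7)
The plan is to reduce inequality (\ref{kvadratik2}) to a single algebraic identity by invoking the explicit formula (\ref{identitet0}) for the $p$-angular distance in an inner product space. Specifically, since $X$ is an inner product space, we have
$$\alpha_p^{2}[x,y]=\left( \|x\|^{p}-\|y\|^{p} \right)^{2}+\|x\|^{p}\|y\|^{p}\alpha^{2}[x,y].$$
Squaring the inequality we wish to prove and transposing everything to one side converts the claim into a comparison between $\alpha_p^2[x,y]$ and $\tfrac{1}{2}(\|x\|^{2p}+\|y\|^{2p})\alpha^2[x,y]$, so the strategy is just to substitute the identity and simplify.

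Concretely, I would compute the difference
$$\alpha_p^{2}[x,y]-\frac{\|x\|^{2p}+\|y\|^{2p}}{2}\alpha^{2}[x,y]$$
by plugging in (\ref{identitet0}). The cross term $\|x\|^p\|y\|^p\alpha^2[x,y]$ combines with $-\tfrac{1}{2}(\|x\|^{2p}+\|y\|^{2p})\alpha^2[x,y]$ to produce $-\tfrac{1}{2}(\|x\|^p-\|y\|^p)^2\alpha^2[x,y]$, and together with the leading term $(\|x\|^p-\|y\|^p)^2$ this factors as
$$\bigl(\|x\|^{p}-\|y\|^{p}\bigr)^{2}\left(1-\frac{\alpha^{2}[x,y]}{2}\right).$$
Thus the sign of the difference is controlled exactly by the sign of $2-\alpha^{2}[x,y]$, giving inequality (\ref{kvadratik2}) when $\alpha[x,y]\leq \sqrt{2}$ and its reversal when $\alpha[x,y]>\sqrt{2}$.

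There is essentially no obstacle here: the work is a one-line factorization. The only care needed is to notice that the threshold $\sqrt{2}$ appearing in the statement is precisely what makes the factor $1-\alpha^{2}[x,y]/2$ change sign, so the dichotomy in the statement is not an extra hypothesis but rather the exact case split dictated by the identity. In particular, at $\alpha[x,y]=\sqrt{2}$ equality holds for all $x,y$, and for linearly dependent $x,y$ with $x+\lambda y=0$ (so $\alpha[x,y]=2$) the inequality is strictly reversed, consistently with Remark \ref{deveti remark}.
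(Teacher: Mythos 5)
Your proposal is correct and is essentially the paper's own argument: the paper rewrites identity (\ref{identitet0}) as $\alpha_p^{2}[x,y]=\frac{\|x\|^{2p}+\|y\|^{2p}}{2}\alpha^{2}[x,y]+\frac{1}{2}\left(\|x\|^{p}-\|y\|^{p}\right)^{2}\left(2-\alpha^{2}[x,y]\right)$, which is exactly the factored difference you compute, and reads off the sign from the factor $2-\alpha^{2}[x,y]$. The only cosmetic difference is that you phrase it as computing a difference rather than rearranging the identity.
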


\begin{proof}
Since $X$ is an inner product space, identity (\ref{identitet0}) is valid and it can be rewritten as
$$
\alpha_p^{2}[x,y]= \frac{\|x\|^{2p}+\|y\|^{2p}}{2}\alpha^{2}[x,y]+\frac{1}{2}\big( \|x\|^{p}- \|y\|^{p}\big)^{2}\big(2-\alpha^{2}[x,y]\big).
$$
Now, if  $\alpha[x,y]\leq \sqrt{2}$, then the second term on the right-hand side of the previous identity is nonnegative, which implies (\ref{kvadratik2}).
In the same way we conclude that for $\alpha[x,y]>\sqrt{2}$ holds the reverse in (\ref{kvadratik2}).
\end{proof}

Our next intention is to establish connection between mutual bounds for angular distance $\alpha[x,y]$ given by (\ref{obostranoao}), and the explicit formula for $\alpha[x,y]$ given by (\ref{zadva}),
 which is a characterization of an inner product space.
It is interesting that if (\ref{zadva}) is valid, then  the square of angular distance $\alpha[x,y]$ is equal to the product of mutual bounds in (\ref{obostranoao}). This fact allows us to
refine these bounds in an inner product space.

\begin{theorem}\label{tm popravi maligrandu}
Let $X=\left( X,\left\langle \cdot ,\cdot
\right\rangle \right) $ be an inner product space and let $r>0$. If $x,y\in X$ are nonzero vectors with $x\neq y$, then hold the  inequalities
\begin{equation}\label{obostranoao_popravi}
\begin{split}
&\left( \frac{1+c^{r}[x,y]}{2} \right)^{-\frac{1}{r}}\dfrac{\Vert x-y\Vert-|\Vert x\Vert - \Vert y\Vert|}{\min\{\Vert x\Vert,\Vert y\Vert\}}\\
\leq &\  \alpha[x,y]
\leq\left( \frac{1+c^{r}[x,y]}{2} \right)^{\frac{1}{r}} \dfrac{\Vert x-y\Vert+|\Vert x\Vert - \Vert y\Vert|}{\max\{\Vert x\Vert,\Vert y\Vert\}},
\end{split}
\end{equation}
where
\begin{equation*}
c[x,y]=\frac{\max\{\Vert x\Vert,\Vert y\Vert\}}{\min\{\Vert x\Vert,\Vert y\Vert\}}\cdot \frac{\Vert x-y\Vert-|\Vert x\Vert - \Vert y\Vert|}{\Vert x-y\Vert+|\Vert x\Vert - \Vert y\Vert|}<1.
\end{equation*}
\end{theorem}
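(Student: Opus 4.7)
The plan is to exploit the explicit inner product space formula (\ref{zadva}) together with the difference-of-squares factorization of its numerator. Denote the Maligranda lower and upper bounds from (\ref{obostranoao}) by
$$L = \frac{\|x-y\|-|\|x\|-\|y\||}{\min\{\|x\|,\|y\|\}}, \qquad U = \frac{\|x-y\|+|\|x\|-\|y\||}{\max\{\|x\|,\|y\|\}}.$$
Since $\min\{\|x\|,\|y\|\}\cdot\max\{\|x\|,\|y\|\} = \|x\|\|y\|$ and $\|x-y\| \geq |\|x\|-\|y\||$ by the reverse triangle inequality, factoring the numerator in (\ref{zadva}) as a difference of squares yields the key product identity
$$\alpha^{2}[x,y] = L \cdot U.$$
This is the only place where the inner product structure is used in the argument.

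Next I would observe from the definitions that $c[x,y] = L/U$, which combined with the product identity yields the compact representations $L = \alpha[x,y]\sqrt{c[x,y]}$ and $U = \alpha[x,y]/\sqrt{c[x,y]}$. Substituting these into both inequalities in (\ref{obostranoao_popravi}), each one collapses to the single scalar inequality
$$\sqrt{c[x,y]} \leq \left(\frac{1+c^{r}[x,y]}{2}\right)^{1/r}.$$
This is nothing but the AM-GM inequality applied to the pair of numbers $1$ and $c^{r}[x,y]$, namely $\frac{1+c^{r}[x,y]}{2} \geq \sqrt{c^{r}[x,y]} = c^{r/2}[x,y]$, followed by raising both sides to the positive power $1/r$.

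The entire proof therefore reduces to two elementary ingredients: the product identity $L \cdot U = \alpha^{2}[x,y]$, and a one-step AM-GM inequality. The main obstacle, and really the only nontrivial insight, is recognizing that in an inner product space the Maligranda bounds bracket $\alpha[x,y]$ not only additively but also multiplicatively, with geometric mean equal to $\alpha[x,y]$. As a byproduct, this representation also clarifies the side condition $c[x,y] < 1$: from (\ref{obostranoao}) we have $L \leq \alpha[x,y] \leq U$, whence $c[x,y] = L/U \leq 1$, with strict inequality except in degenerate aligned configurations.
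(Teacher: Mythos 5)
Your proposal is correct and follows essentially the same route as the paper: both hinge on the identity $\alpha^{2}[x,y]=L\cdot U$ obtained from (\ref{zadva}), after which the paper invokes the power-mean monotonicity $M_{-r}\leq M_{0}\leq M_{r}$ for the pair $\{L,U\}$, which is exactly your single scalar AM--GM inequality $\sqrt{c}\leq\bigl(\tfrac{1+c^{r}}{2}\bigr)^{1/r}$ written in two-variable form. The only cosmetic caveat is that your substitution $L=\alpha\sqrt{c}$, $U=\alpha/\sqrt{c}$ needs a one-line remark for the degenerate case $\alpha[x,y]=0$ (where $L=0$ and both inequalities hold trivially), which the paper's formulation avoids by working with $A$ and $B$ directly.
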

\begin{proof}
Taking into account formula (\ref{zadva}), we have  that $\alpha^{2}[x,y]=AB$,
where
$$
A=\dfrac{\Vert x-y\Vert-|\Vert x\Vert - \Vert y\Vert|}{\min\{\Vert x\Vert,\Vert y\Vert\}}\quad \mathrm{and}  \quad B=\dfrac{\Vert x-y\Vert+|\Vert x\Vert - \Vert y\Vert|}{\max\{\Vert x\Vert,\Vert y\Vert\}}.
$$This means that $\alpha[x,y]$ is a  geometric mean of  $A$ and $B$.
Consequently, due to the  monotonicity property (\ref{mean ineq}), it follows that the  inequalities
$$
\left( \frac{A^{-r}+B^{-r}}{2}  \right)^{-\frac{1}{r}}\leq \alpha[x,y]\leq \left( \frac{A^{r}+B^{r}}{2}  \right)^{\frac{1}{r}}
$$
hold for every $r>0$. Now, the result follows by noting that $c[x,y]=\frac{A}{B}$.
\end{proof}

\begin{remark}\label{napomena za 1}
It follows from the construction that the  inequalities in (\ref{obostranoao_popravi}) provide refinements of mutual bounds in (\ref{obostranoao}) for each exponent $r>0$.
 More precisely, since $\alpha[x,y]$ is represented as the geometric mean of bounds in (\ref{obostranoao}), it follows by monotonicity property (\ref{mean ineq}) that the upper bound
 in (\ref{obostranoao_popravi}) is not greater than the upper bound in (\ref{obostranoao}). In the same way, the lower bound in (\ref{obostranoao_popravi}) is more accurate than the lower bound in 
 (\ref{obostranoao}). 
 In particular, if $r=1$, the  inequalities in (\ref{obostranoao_popravi})
reduce to  the following form:
\begin{equation}\label{za r 1}
\begin{split}
& \frac{2\big[ \left\Vert x-y\right\Vert^2-\left(\left\Vert x\right\Vert-\left\Vert y\right\Vert\right)^2\big]}{\|x-y\| (\|x\|+\|y\|)-(\|x\|-\|y\|)^{2}}\\
\leq &\ \alpha[x,y]\leq \frac{\|x-y\| (\|x\|+\|y\|)-(\|x\|-\|y\|)^{2}}{2\|x\|\|y\|}.
\end{split}
\end{equation}
\end{remark}

The second inequality in (\ref{za r 1}) can be exploited in obtaining a parametric family of characterizing relations
for an inner product space.

\begin{corollary}
Let $X=\left(X, \left\Vert \cdot \right\Vert \right)$ be a normed linear space, let $0<r\leq 1$, and let $c[x,y]$ be defined as in the statement of Theorem {\rm \ref{tm popravi maligrandu}}. Then, $X$ is an inner product space
if and only if the   inequality
\begin{equation}\label{karakter}
\alpha[x,y]
\leq\left( \frac{1+c^{r}[x,y]}{2} \right)^{\frac{1}{r}} \dfrac{\Vert x-y\Vert+|\Vert x\Vert - \Vert y\Vert|}{\max\{\Vert x\Vert,\Vert y\Vert\}}
\end{equation}
holds for all nonzero vectors $x,y\in X$ with $x\neq y$.
\end{corollary}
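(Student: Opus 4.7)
My plan is to handle the two implications separately. The necessity direction is immediate from Theorem~\ref{tm popravi maligrandu}: if $X$ is an inner product space, (\ref{obostranoao_popravi}) holds for every $r>0$, and its upper half is precisely (\ref{karakter}). All the effort therefore goes into the converse.

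For sufficiency, I would reduce the hypothesis (\ref{karakter}) to the Kirk--Smiley characterization (\ref{stanley}), using the power-mean inequality (\ref{mean ineq}) as the main lever. Setting
\[
A=\frac{\|x-y\|-\big|\|x\|-\|y\|\big|}{\min\{\|x\|,\|y\|\}},\qquad B=\frac{\|x-y\|+\big|\|x\|-\|y\|\big|}{\max\{\|x\|,\|y\|\}}
\]
exactly as in the proof of Theorem~\ref{tm popravi maligrandu}, one has $c[x,y]=A/B$, so the right-hand side of (\ref{karakter}) equals the equally-weighted power mean $\bigl((A^{r}+B^{r})/2\bigr)^{1/r}=M_r(A,B)$. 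Since $0<r\leq 1$, the monotonicity (\ref{mean ineq}) then gives $M_r(A,B)\leq M_1(A,B)=(A+B)/2$, and the algebraic identity displayed in Remark~\ref{napomena za 1} identifies $(A+B)/2$ with the right-hand side of the upper estimate in (\ref{za r 1}). So the hypothesis forces
\[
\alpha[x,y]\leq \frac{\|x-y\|(\|x\|+\|y\|)-(\|x\|-\|y\|)^{2}}{2\|x\|\|y\|}.
\]

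The final step is to deduce (\ref{stanley}) from this estimate. Using the elementary identity $(\|x\|+\|y\|)^{2}-4\|x\|\|y\|=(\|x\|-\|y\|)^{2}$, one checks after cross-multiplication that
\[
\frac{\|x-y\|(\|x\|+\|y\|)-(\|x\|-\|y\|)^{2}}{2\|x\|\|y\|}\leq \frac{2\|x-y\|}{\|x\|+\|y\|}
\]
is equivalent to $(\|x\|-\|y\|)^{2}\bigl(\|x\|+\|y\|-\|x-y\|\bigr)\geq 0$, which is nothing but the triangle inequality. Therefore (\ref{stanley}) holds for every nonzero pair $x,y\in X$, and the Kirk--Smiley characterization \cite{stenli} forces $X$ to be an inner product space. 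I do not expect any genuine obstacle in this plan; the most delicate point is the bookkeeping recognition of the right-hand side of (\ref{karakter}) as the power mean $M_r(A,B)$, after which the whole argument is just (\ref{mean ineq}) combined with the triangle inequality.
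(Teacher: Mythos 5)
Your proposal is correct and follows essentially the same route as the paper: necessity from Theorem \ref{tm popravi maligrandu}, and for sufficiency the identification of the right-hand side of (\ref{karakter}) with $M_r(A,B)$, the reduction via (\ref{mean ineq}) to the $r=1$ bound in (\ref{za r 1}), and then the triangle-inequality comparison with $2\|x-y\|/(\|x\|+\|y\|)$ followed by Kirk--Smiley. The only difference is presentational — you spell out the power-mean step and the cross-multiplication that the paper compresses into a single displayed chain.
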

\begin{proof}
It suffices to prove that a normed linear space satisfying (\ref{karakter}) is an inner product space. Taking into account Theorem \ref{tm popravi maligrandu}, Remark \ref{napomena za 1}, property (\ref{mean ineq}) and the triangle inequality, we have
\begin{equation*}
\begin{split}
\alpha[x,y]
&\leq\left( \frac{1+c^{r}[x,y]}{2} \right)^{\frac{1}{r}} \dfrac{\Vert x-y\Vert+|\Vert x\Vert - \Vert y\Vert|}{\max\{\Vert x\Vert,\Vert y\Vert\}}\\
&\leq \frac{\|x-y\| (\|x\|+\|y\|)-(\|x\|-\|y\|)^{2}}{2\|x\|\|y\|}\\
&=\frac{\|x-y\| (\|x\|+\|y\|)^2-(\|x\|+\|y\|)(\|x\|-\|y\|)^{2}}{2\|x\|\|y\|(\|x\|+\|y\|)}\\
&\leq \frac{\big[(\|x\|+\|y\|)^2-(\|x\|-\|y\|)^2   \big]\|x-y\|}{2\|x\|\|y\|(\|x\|+\|y\|)}=\frac{2\|x-y\|}{\|x\|+\|y\|}.
\end{split}
\end{equation*}
Obviously, this proves our assertion due to characterization (\ref{stanley}) given by Kirk and Smiley.
\end{proof}

In order to conclude this paper, we give the  lower bound for the  $p$-angular distance expressed in terms of angular distance $\alpha[x,y]$, which is also consequence of
the right inequality in (\ref{za r 1}). The following lower bound for $\alpha_p[x,y]$  also gives a characterization of inner product spaces.

\begin{corollary}\label{pametniteorem}
Let $p\neq 0$. Then a normed linear space  $X=\left(X, \left\Vert \cdot \right\Vert \right)$, $\mathrm{dim}\ \! X\geq 3$, is an inner product space
if and only if the   inequality
\begin{equation}
\label{pametno}
\alpha_p[x,y]\geq \frac{2\|x\|^{p}\|y\|^{p}\alpha[x,y]+(\|x\|^{p}- \|y\|^{p})^{2}}{\|x\|^{p}+ \|y\|^{p}}
\end{equation}
holds for all nonzero vectors $x,y\in X$.
\end{corollary}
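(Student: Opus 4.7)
The plan is to establish both directions by leveraging the second inequality in (\ref{za r 1}) and the characterization in Theorem A(II). For the forward direction, assuming $X$ is an inner product space, I would apply the right-hand inequality in (\ref{za r 1}) to the rescaled pair $x' = \|x\|^{p-1} x$, $y' = \|y\|^{p-1} y$ (nonzero whenever $x,y$ are, and distinct whenever $x \neq y$). A direct computation gives $\|x'\| = \|x\|^{p}$, $\|y'\| = \|y\|^{p}$, $\|x'-y'\| = \alpha_p[x,y]$, and $\alpha[x',y'] = \alpha[x,y]$ by homogeneity of the ordinary angular distance under positive scaling. Substituting these expressions yields
\[
\alpha[x,y] \leq \frac{\alpha_p[x,y]\bigl(\|x\|^{p} + \|y\|^{p}\bigr) - \bigl(\|x\|^{p} - \|y\|^{p}\bigr)^{2}}{2\|x\|^{p}\|y\|^{p}},
\]
and rearranging produces exactly (\ref{pametno}); the excluded case $x=y$ makes (\ref{pametno}) trivial since both sides vanish.

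For the converse, assume (\ref{pametno}) holds for all nonzero $x,y \in X$. The key observation is that the bound in (\ref{pametno}) is strictly stronger than the characterizing inequality furnished by statement (II) of Theorem A, taken with the role of $p$ played by $0$ and the role of $q$ played by the present $p$ (the hypothesis $0 \leq 0/p < 1$ holds since $p \neq 0$). That characterization asserts that $X$ is an inner product space if and only if $\alpha_p[x,y] \geq \tfrac{1}{2}(\|x\|^{p} + \|y\|^{p})\,\alpha[x,y]$ for all $x,y \neq 0$. Writing $A = \|x\|^{p}$, $B = \|y\|^{p}$, $\alpha = \alpha[x,y]$, a short algebraic identity yields
\[
\frac{2AB\alpha + (A-B)^{2}}{A+B} - \frac{(A+B)\alpha}{2} = \frac{(A-B)^{2}(2-\alpha)}{2(A+B)} \geq 0,
\]
where the non-negativity rests on the universal bound $\alpha[x,y] \leq 2$. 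Hence the right-hand side of (\ref{pametno}) dominates $\tfrac{1}{2}(A+B)\alpha$, so the Theorem A(II) inequality follows from (\ref{pametno}); since $\dim X \geq 3$, Theorem A then forces $X$ to be an inner product space.

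The argument is essentially bookkeeping on top of the Maligranda-type refinement (\ref{za r 1}) and Theorem A, so I do not anticipate any serious obstacle. The only step requiring genuine insight is recognizing that the additive correction $(\|x\|^{p} - \|y\|^{p})^{2}$ in (\ref{pametno}) precisely bridges the power-mean gap between the harmonic-type mean $\tfrac{2AB}{A+B}$ and the arithmetic mean $\tfrac{A+B}{2}$ of $A$ and $B$, so that the passage from (\ref{pametno}) to the Theorem A(II) inequality costs only a nonnegative factor of $2-\alpha[x,y]$.
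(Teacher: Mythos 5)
Your proof is correct, and the forward direction is exactly the paper's: apply the right inequality in (\ref{za r 1}) to $x'=\|x\|^{p-1}x$, $y'=\|y\|^{p-1}y$ and use homogeneity of the angular distance. In the converse you take a slightly different route. The paper simply discards the nonnegative term $(\|x\|^{p}-\|y\|^{p})^{2}/(\|x\|^{p}+\|y\|^{p})$ from (\ref{pametno}), obtaining the harmonic-mean bound
$$
\alpha_p[x,y]\geq \frac{2\|x\|^{p}\|y\|^{p}}{\|x\|^{p}+\|y\|^{p}}\,\alpha[x,y]=\left(\frac{\|x\|^{-p}+\|y\|^{-p}}{2}\right)^{-1}\alpha[x,y],
$$
and invokes statement (V) of Theorem A (witnessed by $r=-1$). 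You instead keep that term and use the exact identity
$$
\frac{2AB\alpha+(A-B)^{2}}{A+B}-\frac{(A+B)\alpha}{2}=\frac{(A-B)^{2}(2-\alpha)}{2(A+B)}\geq 0,\qquad A=\|x\|^{p},\ B=\|y\|^{p},\ \alpha=\alpha[x,y]\leq 2,
$$
to reach the arithmetic-mean bound $\alpha_p[x,y]\geq\frac{1}{2}(\|x\|^{p}+\|y\|^{p})\alpha[x,y]$, which is statement (II) of Theorem A with the roles $p\mapsto 0$, $q\mapsto p$ (the hypotheses $0\leq 0/p<1$, $|0|\leq|p|$, $0\neq p$ are all met, and $\dim X\geq 3$ is assumed). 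Both invocations are legitimate; your version establishes the marginally stronger intermediate fact that (\ref{pametno}) dominates the arithmetic-mean characterizing inequality (which dominates the harmonic-mean one by (\ref{mean ineq})), at the cost of one extra identity, whereas the paper's is a one-line weakening. Either way the logic is sound and the handling of the degenerate case $x=y$ in the forward direction is taken care of correctly.
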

\begin{proof}
Let $X$ be an inner product space. Then, rewriting  the right inequality in (\ref{za r 1}) with
$x'=\|x\|^{p-1}x$, $y'=\|y\|^{p-1}y$ instead of $x$, $y$, respectively, we obtain the inequality
$$
\alpha[x,y]\leq\frac{\alpha_p[x,y](\|x\|^{p}+ \|y\|^{p})-(\|x\|^{p}- \|y\|^{p})^{2}}{2\|x\|^{p}\|y\|^{p}},
$$
due to homogeneity of $\alpha[x,y]$, that is,  $\alpha[x',y']=\alpha[x,y]$. Obviously, the last inequality is equivalent to (\ref{pametno}).

On the other hand, assume that $X$ is a normed linear space satisfying inequality (\ref{pametno}). Then we have
$$
\alpha_p[x,y]\geq \frac{2\|x\|^{p}\|y\|^{p}\alpha[x,y]}{\|x\|^{p}+ \|y\|^{p}}=\left( \frac{\|x\|^{-p}+ \|y\|^{-p}}{2} \right)^{-1}\alpha[x,y],
$$
so $X$ is an inner product space due to Theorem A, statement (V).
\end{proof}

\noindent {\bf Acknowledgment.} The authors would like to thank the  anonymous referee for some valuable comments and useful suggestions.

\vspace*{5mm}

\noindent  \noindent University of Zagreb, Faculty of Electrical Engineering and Computing,  Unska 3, 10000 Zagreb, CROATIA \\
E-mail: {\tt mario.krnic@fer.hr}

\vspace*{3mm}

\noindent Transilvania University of Bra\c{s}ov, Iuliu Maniu Street, No. 50, 500091, Bra\c{s}ov, ROMANIA \\
E-mail: {\tt minculeten@yahoo.com}

\end{document}